\begin{document}

\title{The Compulsive Gambler Process}

%\DEDICATORY{Dedicated to the memory of ...} % Optional

%%%%%%%%%%%%%%%%%%%%%%%%%%%%%%%%%%%%%%%%%%%%%%%%%%%%%%%%%%%%%%%%%%%
%%                                                               %%
%% Authors (please edit and customize):                          %%
%%                                                               %%
%%%%%%%%%%%%%%%%%%%%%%%%%%%%%%%%%%%%%%%%%%%%%%%%%%%%%%%%%%%%%%%%%%%

\author{%
  David~Aldous\\U.C. Berkeley, USA.\\
   aldous@stat.berkeley.edu 
  \and 
  Daniel~Lanoue\\U.C. Berkeley, USA.\\ dlanoue@math.berkeley.edu
\and
Justin~Salez\\Universit\'{e} Paris 7, France. \\
salez@math.univ-paris-diderot.fr
}

\maketitle

%\KEYWORDS{coalescent; exchangeable; interacting particle system; martingale; social dynamics}
 % Separate items with ;

%\AMSSUBJ{60J27; 60K35} % Edit. Separate items with ;
%\AMSSUBJSECONDARY{FIXME:} % Optional, separate items with ;

\begin{abstract}
In the {\em compulsive gambler} process there is a finite set of agents who meet pairwise at random times 
($i$ and $j$ meet at times of a rate-$\nu_{ij}$ Poisson process) and, upon meeting, play an instantaneous fair game in which one wins the other's money.  We introduce this process and describe some of its basic properties.
Some properties are rather obvious (martingale structure;  comparison with Kingman coalescent) while others are more subtle (an ``exchangeable 
over the money elements" property, and a construction reminiscent of the Donnelly-Kurtz look-down construction).  Several directions for possible future 
research are described. One -- where agents meet neighbors in a  sparse graph -- is studied here, and 
another -- a continuous-space extension called the {\em metric coalescent} -- is studied in Lanoue (2014).
\end{abstract}

 \renewcommand{\Pr}{{\mathbb{P}}}
\newcommand{\Agents}{\mathbf{Agents}}
\newcommand{\bX}{\mathbf{X}}
\newcommand{\bS}{\mathbf{S}}
\newcommand{\TT}{{\mathcal T}}
\newcommand{\FF}{{\mathcal F}}
\newcommand{\GG}{{\mathcal G}}
\newcommand{\Reals}{\mathbb R}
\newcommand{\Ints}{\mathbb Z}
\newcommand{\Tree}{\mathbb T}
\newcommand{\bx}{\mathbf{x}}
 \newcommand{\sfrac}[2]{{\textstyle\frac{#1}{#2}}}
 \newcommand{\Ex}{{\mathbb E}}
\newcommand{\var}{\mathrm{var}}
\newcommand{\PP}{{\mathcal P}}
\newcommand{\Pfs}{\PP_{\mathrm{fs}}(S)}
\newcommand{\supp}{\mathrm{support}}
\newcommand{\ER}{Erd\H{o}s-R\'enyi }

\newtheorem{Theorem}{Theorem}[section]
\newtheorem{corollary}[Theorem]{Corollary}
\newtheorem{proposition}[Theorem]{Proposition}

\newtheorem{lemma}[Theorem]{Lemma}

\section{Introduction}
The style of models known to probabilists as 
Interacting Particle Systems (IPS) \cite{MR776231}
have found use in many fields across the mathematical and social sciences.
Often the underlying
conceptual picture is of a social network, where individual ``agents" 
meet pairwise and update their ``state" (opinion, activity etc) in a way 
depending on their previous states.  
This picture motivates a precise general setup we call  
{\em Finite Markov Information Exchange (FMIE) processes} 
\cite{MR3102546}.
Consider a set $\Agents$ of $n$   agents
and a nonnegative array 
$(\nu_{ij})$, indexed by unordered pairs $\{i,j\}$, 
which is irreducible 
(i.e. the graph of edges corresponding to strictly positive entries is connected).  
Assume
\begin{itemize}
\item Each unordered pair $i,j$ of agents with $\nu_{ij} > 0$  
meets at  the times of a rate-$\nu_{ij}$  Poisson process, 
independent for different pairs.
\end{itemize}
Call this collection of Poisson processes the {\em meeting process}; 
the array $(\nu_{ij})$ specifies the {\em meeting model}.
A specific FMIE is a specific rule (deterministic or random) for updating states, so this encompasses most of the familiar IPS models such as the voter model and contact process.  
But our emphasis differs from the classical emphasis of IPS in several ways; the states are typically numerical rather than categorical, the number $n$ of agents is finite 
(though we consider $n \to \infty$ asymptotics) 
and we focus on obtaining rough results for 
general meeting rates 
rather than sharp results for very specific meeting rates.

One specific FMIE model is the
{\em averaging process}  \cite{MR2908618} in which  
 agents initially have different amounts of money; whenever two agents meet, they share their combined money equally.  
In this paper we introduce and study a conceptually opposite model, 
the {\em compulsive gambler process}.  In the ``simple" form of the model, 
agents each start with one unit money. 
When two agents meet, if they each have non-zero money (say amounts $a$ and $b$) 
then they instantly play a fair game in which one agent acquires the combined amount $a+b$ (so with probabilities $a/(a+b)$ and $b/(a+b)$) 
respectively).  In the ``standardized" form of the model the initial fortunes can be non-uniform, and we scale so that the total money equals $1$.

This is an invented model for which we do not claim realism\footnote{Any perceived analogy between 
averaging/compulsive gambler models
 and socialism/capitalism is entirely the reader's responsibility.}, 
 but we do claim some mathematical interest as an intermediary between 
IPS theory and coalescent theory.

\subsection{Elementary observations}
\label{sec:outline}
First consider a fixed meeting model on $n$ agents.
Write $\bX(t) = (X_i(t), i \in \Agents)$ for the time-$t$ configuration of the
simple compulsive gambler process; agent $i$ has $X_i(t)$ units of money.
The following assertions are true, and mostly obvious.
We will give proofs, where necessary,  and some crude quantifications in section \ref{sec:basic} 
(Lemmas \ref{L1} and \ref{L2}).

\noindent
(i) $\bX(t)$ is a finite-state continuous-time Markov chain which,
at some a.s. finite random time $T$, reaches 
 some absorbing configuration $\bX^*$ in which there is some random non-empty set $\TT$ of agents who are {\em solvent}, i.e. have non-zero money.

\noindent
(ii) If $\nu_{ij} > 0$ for all $j \ne i$ then $|\TT| = 1$ a.s., and we call $T$ 
the {\em fixation time}.
Furthermore, because each $(X_i(t), 0 \le t < \infty)$ is a martingale we have 
$\Pr(\TT = \{i\} )= \Pr(X^*_i = n)  = 1/n$ for each agent $i$.

\noindent
(iii)  If $\nu_{ij} = 0$ for some $j \ne i$ then $\Pr ( |\TT| = 1)$ is strictly between $0$ and $1$.

\medskip
\noindent
These facts suggest more quantitative questions to ask,
in the setting of a sequence $(\nu^{(n)}_{ij})$ of meeting models with 
$n \to \infty$.  
How does $T^{(n)}$ behave? 
In case (iii), how do $|\TT^{(n)}|$ and the distribution 
$\Pr(X^*_i \in \cdot \vert i \in \TT^{(n)})$ of a typical ``final fortune"  behave?   
In either case we can ask how the process of the number of solvent agents
\[
N(t) : = | \{i: X_i(t) > 0\}| 
\]
behaves over $0 \le t \le T$.  
If the meeting model has some spatial structure then what can we say about the spatial structure of the set of  solvent agents at time $t$?
We continue this discussion of research directions in section \ref{sec:directions}.  

\subsection{Techniques}
It turns out that a surprising 
variety of techniques can be exploited in the study of the compulsive gambler process.
Amongst these techniques, to be described in section \ref{sec:basic}, the most natural are martingale results (Lemma \ref{L:mg}) and elementary bounds obtained by comparison with the Kingman coalescent (e.g. Lemma \ref{L1}).  
Less obvious is Lemma \ref{L:augment}: instead of making the random choices of game-winners at the meeting times, we can insert initial randomness and then have a deterministic rule for game-winners.
And in the ``simple" case that construction has a  symmetry property (Lemma \ref{L:set-augment}):
 the deterministic rule is based on a uniform random labeling of initial currency notes as $1,\ldots,n$, and conditional on the configuration of fortunes at time $t$, the allocation of note-labels to agents is uniform random.
This last method seems very similar to methods used in the study of 
{\em exchangeable coalescents} \cite{MR2574323,MR2253162,EC},
though the precise relation is not clear to us and we have not used results from that theory.

\subsection{Directions for future research}
\label{sec:directions}
Our main purpose  is to lay the groundwork for future research 
by describing explicitly these techniques (section \ref{sec:basic}).
In this paper we pursue analysis in only one direction, by studying the setting
where the meeting model is that agents meet neighbors in a sparse graph (section \ref{sec:sparse}).
Here are some other directions of current or future research.

\paragraph{The metric coalescent.}  This concerns a continuous-space extension. 
Take a suitable space $S$, write $\PP(S)$ for the space of probability measures $\mu$ on $S$ and write $\Pfs \subset \PP(S)$ for the subspace of finite support probability measures.  
Consider a symmetric function $\nu: S^2 \to \Reals_{\ge 0}$.
For any $\mu \in \Pfs$, we can consider the standardized compulsive gambler 
process for which the set $\Agents$ is the support $\{s_1,\ldots, s_n\}$ of $\mu$, the meeting rates are the $\nu(s_i,s_j)$, 
and the initial distribution of money is $\mu$; and moreover we can 
regard the states of the process as elements of $\Pfs$.  
So we can reconsider the standardized compulsive gambler 
process as a Markov process, specified by the function $\nu$, whose state space is (all of) $\Pfs$. 
Then we can ask, inspired by the Kingman coalescent and its extensions \cite{MR2253162}, whether it makes sense to imagine this process starting with a general (in particular, non-atomic) initial 
state $\mu_0 \in \PP(S)$.  
This topic is studied in detail in \cite{lanoue-MC} in the context of
 a complete separable locally finite metric space $(S,d)$
and meeting rates of the form 
\[ \nu(s,s^\prime) = \phi( d(s,s^\prime)) \]
for some continuous  function $\phi(\cdot) > 0$.  
The main result of \cite{lanoue-MC} is that, under the condition 
$\lim_{x \downarrow 0} \phi(x) = \infty$,  the standardized compulsive gambler process on $\Pfs$ extends to a Feller process 
(the {\em metric coalescent}) on all of $\PP(S)$.  
In particular, for an initial $\mu_0 \in \PP(S)$ with compact support, 
the metric coalescent process 
$(\mu_t, 0 \le t < \infty)$ has finite support at each $t_0 > 0$, evolves as the compulsive gambler process over $[t_0, \infty)$ and satisfies the initial condition 
\[ \Pr ( \lim_{t \downarrow 0} \mu_t = \mu_0 \mbox{ in } \PP(S) ) = 1 . \]
A key ingredient in the proof is Corollary \ref{C:set-aug} below.  
In informal language, Corollary \ref{C:set-aug}  says that for the simple compulsive gambler process, instead of determining the game winners at the meeting times, we can do so via initial randomization, as follows.  
Initially each agent has a currency note with a random serial number; when two solvent agents meet, each has a collection of notes, but now the winner is always the owner of the lowest-ranked note, ranking by serial number.
In other words we start with a uniform random ordering $s_1,\ldots,s_n$  of $\Agents$, ranked by serial number of note.
In the continuous-space setting we can do the same construction but starting with i.i.d. ($\mu_0$) random samples $s_1,\ldots,s_n$.  
For each $n$ we now have a $\Pfs$-valued process 
$(\mu^{(n)}_t, 0 \le t < \infty)$. 
But as $n$ varies these processes have a natural coupling and the 
metric coalescent can be constructed as the a.s. $n \to \infty$ limit process.

\paragraph{Infinite discrete space.}  For a countable infinite set $\Agents$, the simple compulsive gambler process 
is well-defined under certain conditions, for instance if 
\begin{equation}
\nu^* := \sup_i \sum_{j \neq i} \nu_{ij} < \infty . 
\label{Zd1}
\end{equation}
In particular, for an infinite vertex-transitive bounded degree graph, 
with meeting rates $\nu_e \equiv 1$ for edges $e$, there is a random set 
$\TT$ of agents who remain solvent in the  $t \to \infty$ limit, and one can seek to calculate the density $\Pr(i \in \TT)$ of that set -- see section \ref{sec:sparse} for the case of the $r$-regular tree.
For another direction, 
consider the case where $\Agents = \Ints^d$ and the meeting rates are 
\begin{equation}
 \nu_{ij} = ||j - i||^{-\alpha} 
\label{alpha}
\end{equation}
for some $\alpha > d$, implying (\ref{Zd1}). 
Consider the mean density of solvent agents at time $t$
\[ \rho(t) := \Pr(X_i(t) \neq 0) \]
and the conditional distribution $X^*(t)$ of $X_i(t)$ given $X_i(t) \neq 0$, 
for which $\Ex X^*(t) = 1/\rho(t)$ because $\Ex X_i(t) \equiv 1$.  
Heuristic arguments,  based on supposing the positions of solvent agents do not become ``clustered", suggest that
\[ \rho(t) \asymp t^{-\beta} \mbox{ for } 
\beta = \sfrac{d}{\alpha} .\] 
It is then plausible that 
\[ \rho(t) X^*(t) \to_d Z, \ \mbox{ for some $Z$ such that } \Ex Z = 1 \] 
and then that the process has a scaling limit, the limit being a  process whose states are 
(locally finite support) measures on $\Reals^d$.

\paragraph{Other finite-agent meeting models.}  For the complete graph meeting model 
($\nu_{ij} \equiv 1$) the compulsive gambler process is essentially just the Kingman coalescent.  
 On the $d$-dimensional discrete torus $\Ints^d_m$ 
one could reconsider meeting rates as at (\ref{alpha}). 
By the heuristics above, for $\alpha > d$ we expect the fixation time $T$ to scale as 
$\rho^{-1}(m^{-d}) =  m^\alpha$. 
In this setting it makes sense to consider also the case $\alpha < d$, but in this case an agent will tend to meet distant agents rather than nearby ones, and by comparison with the Kingman 
coalescent we expect $T$ to scale as the total meeting rate 
$\sum_{j \ne i} \nu_{ij}$ of a given agent, that is as 
$m^{d - \alpha}$.  
Finally, by the techniques of  section \ref{sec:sparse} one can calculate 
the asymptotic density (\ref{LWC2}) of solvent agents under the sparse 
\ER meeting model, a result which can alternatively be seen in terms of the {\em short-time} behavior of the Kingman coalescent 
(section \ref{sec:ERKC}).

\section{Four basic techniques}
\label{sec:basic}
We now abbreviate ``compulsive gambler" to CG.
Fix a meeting model $(\nu_{ij})$ on a set $\Agents$ of $n$ agents.
As in section \ref{sec:outline}, write $\bX(t) = (X_i(t), i \in \Agents)$ for the time-$t$ configuration of the
CG process; agent $i$ has $X_i(t)$ units of money. 
And write
\[
N(t) : = \vert \{i: X_i(t) > 0\} \vert
\]
for the number of {\em solvent} agents.
The CG process is specified by its transition rates.
For each ordered distinct pair $(j,k)$ with $\min(x_j,x_k) > 0$,
\begin{equation}
 \bx \to \bx^{(j,k)} \mbox{ at rate } \nu_{jk} \sfrac{x_j}{x_j + x_k}; \quad \mbox{ where } 
\label{def:rates}
\end{equation}
\[ \bx^{(j,k)}_i = x_i, i \ne j,k; \quad \bx^{(j,k)}_j = x_j+x_k; \quad 
\bx^{(j,k)}_k = 0 . \] 
In full generality the state space conists of all configurations 
$\bx = (x_1,\ldots, x_n)$ with $x_i \ge 0 \ \forall i$.  
For the {\em simple} CG process the initial state is 
$X_i(0) = 1 \ \forall i \in \Agents$.
For the {\em standardized} CG process there is an initial state 
$\bx = (x_i)$ with $x_i \ge 0 \ \forall i$ and $\sum_i x_i = 1$.  
Clearly $\sum_i X_i(t) \equiv n$ in the simple case and 
$\sum_i X_i(t) \equiv 1$ in the standardized case.  
Results in this section \ref{sec:basic} hold 
hold in both simple and standardized 
cases unless otherwise stated.

\subsection{Martingale properties}
We first record some notation for the elementary stochastic calculus of 
integrable bounded variation processes.  Such a process $(Z_t)$ has a Doob-Meyer
decomposition $Z_t = M_t + A_t$, where $(M_t)$ is a martingale and $(A_t)$ is predictable, which can be written in differential notation as 
$dZ_t = dM_t + dA_t$.  
To avoid introducing new symbols, we write 
$\Ex (dZ_t \vert \FF_t)$ for $dA_t$.

\begin{lemma}
\label{L:mg} 
For any meeting process: \\
(i) $(X_i(t), 0 \le t < \infty)$ is a martingale. \\
(ii) For $j \ne i$, $(X_i(t)X_j(t), 0 \le t < \infty)$ is a supermartingale. \\
(iii) For $ f:\Agents \to \Reals$ write $M_f(t) = \sum_i f(i) X_i(t)$.  
Given a metric $d$ on $\Agents$ write 
\[ L_f := \max_{j \ne i} \sfrac{|f(j) - f(i)|}{d(i,j)}, \quad 
\nu^* := \max_{j \ne i} \nu_{ij} d^2(i,j) . \]
Then $(M_f(t), 0 \le t < \infty)$ is a martingale, and for a
 standardized CG process, 
\[\Ex M^2_f(t) - M^2_f(0) \le \sfrac{1}{2} \nu^* L_f^2 t .\]
\end{lemma}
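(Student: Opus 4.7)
The plan is to do all three parts by direct calculation of drifts from the transition rates (\ref{def:rates}). For each functional $Z_t$ of the state I would compute $\Ex(dZ_t \mid \FF_t)$ by summing, over unordered pairs $\{j,k\}$, the meeting rate $\nu_{jk}$ times the expected change in $Z$ conditional on that meeting, the latter being a weighted average over the two outcomes with weights $x_j/(x_j+x_k)$ and $x_k/(x_j+x_k)$.

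For part (i), only meetings involving $i$ affect $X_i$, and a meeting of $\{i,k\}$ gives expected change $X_k \cdot X_i/(X_i+X_k) - X_i \cdot X_k/(X_i+X_k) = 0$. Hence $\Ex(dX_i(t) \mid \FF_t) = 0$, and since $X_i$ is bounded it is a true martingale. For part (ii), meetings between $i$ and some $k \ne j$ preserve $X_j$ and contribute no drift to $X_i X_j$ by part (i); symmetrically for meetings involving $j$ but not $i$. The only remaining contribution is from meetings of $\{i,j\}$ themselves, after which $X_i X_j \equiv 0$. This gives drift $-\nu_{ij} X_i(t) X_j(t) \le 0$, so $X_i X_j$ is a supermartingale.

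For part (iii), the martingale assertion is immediate from (i) and linearity. For the variance bound I would compute the predictable quadratic variation. A short calculation based on (\ref{def:rates}) shows that a meeting of $\{j,k\}$ produces a jump $(f(j)-f(k)) X_k$ with probability $X_j/(X_j+X_k)$ and $(f(k)-f(j)) X_j$ with probability $X_k/(X_j+X_k)$, so the expected squared jump conditional on the meeting simplifies via $[x_j x_k^2 + x_j^2 x_k]/(x_j+x_k) = x_j x_k$ to exactly $(f(j)-f(k))^2 X_j X_k$. Summing against the rates,
\[
\frac{d}{dt}\langle M_f \rangle_t \;=\; \sum_{\{j,k\}} \nu_{jk}(f(j)-f(k))^2 X_j(t) X_k(t).
\]
Applying $(f(j)-f(k))^2 \le L_f^2 d^2(j,k)$ and $\nu_{jk} d^2(j,k) \le \nu^*$, then bounding $\sum_{\{j,k\}} X_j X_k \le \sfrac{1}{2}(\sum_i X_i)^2 = \sfrac{1}{2}$ in the standardized case, yields $d\langle M_f\rangle_t/dt \le \sfrac{1}{2}\nu^* L_f^2$. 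Integrating and invoking the identity $\Ex M_f^2(t) - M_f^2(0) = \Ex\langle M_f\rangle_t$ (which is valid because $M_f$ is bounded, so all localization issues vanish) gives the stated inequality.

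The computations themselves are routine; the only mild obstacle is organizing the bookkeeping for the expected squared jump so that the algebraic simplification is visible, and tracking where the factor $\sfrac{1}{2}$ in the final bound originates -- namely, from writing the off-diagonal mass $\sum_{\{j,k\}} X_j X_k$ as half of $(\sum_i X_i)^2$ minus the nonnegative diagonal $\sum_i X_i^2$.
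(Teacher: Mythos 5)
Your proof is correct and takes essentially the same route as the paper: the identical computation yields $\Ex(dM^2_f(t)\mid\FF(t)) = dt\,\sum_{\{j,k\}}\nu_{jk}X_j(t)X_k(t)(f(j)-f(k))^2$, followed by the same bounds via $L_f$ and $\nu^*$. The only (harmless) difference is that you bound $\sum_{\{j,k\}}X_j(t)X_k(t)\le \sfrac{1}{2}$ pathwise using conservation of total money, whereas the paper takes expectations and uses part (ii) to reduce to the initial configuration; your shortcut is valid and in fact does not need (ii) at this step.
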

\begin{proof}
(i) and (ii) are straightforward.  For (iii), $M_f(\cdot)$ is a martingale, and we calculate
\[
 \Ex(dM^2_f(t)|\FF(t)) = dt \ 
 \sum_{\{i,j\}} \nu_{ij} X_i(t)X_j(t) (f(j)-f(i))^2 ,
\]
the sum being over unordered pairs.
From (ii) we have $\Ex X_i(t)X_j(t) \le x_ix_j$, where $\bx = (x_i)$ 
is the initial configuaration, so taking expectation
\begin{eqnarray*}
 \Ex (dM^2_f(t)) &\le& dt \ \times \sfrac{1}{2} 
\sum_i \sum_{j \ne i} ( \nu^* d^{-2}(i,j) \times (L_f d(i,j))^2 \times x_ix_j )
\\
&=& dt \ \times \sfrac{1}{2} \nu^* L_f^2 \sum_i \sum_{j \ne i} x_ix_j \\
&\le& dt \ \times \sfrac{1}{2} \nu^* L_f^2.
\end{eqnarray*} 
\end{proof}

An explicit formula for $\Ex M^2_f(t)$ is given in Lemma \ref{L:m12}.

\subsection{The Kingman coalescent}
In the particular case
\[ \nu_{ij} = 1, \quad j \ne i \] 
of the meeting model, the compulsive gambler process
is essentially the well-studied 
{\em Kingman coalescent} \cite{MR2574323}. 
In this case the process $(N(t), 0 \le t < \infty)$ is the ``pure death" 
Markov chain, started at $n$,  with transition rates 
$q_{m,m-1} = {m \choose 2}$, from which it immediately follows 
that the  fixation time 
\begin{equation}
T : = \min \{t: N(t) = 1\}
\label{def:fixation}
\end{equation}
is a.s. finite with expectation
\begin{equation}
\Ex T = \sum_{m=2}^n 1/{m \choose 2} = 2(1 - n^{-1}) .
\label{Kingman:eq}
\end{equation}
Here is a simple application.
\begin{lemma}
\label{L1}
Consider a meeting model for which
$\delta : = \min_{j\ne i} \nu_{ij} > 0$. \\
(i) The fixation time $T$ satisfies $\Ex T \le 2/\delta$; \\
(ii) $\Pr(N(t) > r) \le \sfrac{2}{r\delta t}, \ r \ge 2$;\\
(iii) $\Ex N(t) \le \sfrac{C}{t \delta}, \ 0 < t < \infty$ 
for some numerical constant $C < \infty$;\\
(iv)  Let $L$ be the agent who has acquired all the money at time $T$.
In the simple CG process, $\Pr(L=i) = 1/n, \quad \forall i \in \Agents$.  
In the standardized CG process with initial configuration $(x_i)$, 
$\Pr(L=i) = x_i, \quad \forall i \in \Agents$.  
\end{lemma}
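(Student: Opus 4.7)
The plan is to handle (i)--(iii) by stochastic domination with the Kingman coalescent (Section~2.2) and (iv) by optional stopping applied to the martingale of Lemma~\ref{L:mg}(i). The key pathwise observation for (i)--(iii) is that in a CG encounter between two solvent agents, exactly one becomes insolvent, so $N(t)$ decreases by one, and conditional on $N(t)=m$ the total rate at which $N$ decreases is
\[ \sum_{\{i,j\}\text{ both solvent}} \nu_{ij} \;\ge\; \delta \binom{m}{2}, \]
which is exactly the transition rate of the Kingman block-count chain $\tilde N$ run at rate $\delta$ per pair. A pathwise thinning coupling then yields $N(t) \le \tilde N(t)$ for all $t$. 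Part (i) is immediate from (\ref{Kingman:eq}) after rescaling time by $\delta$. For (ii), set $T_r := \inf\{t : N(t) \le r\}$ and note $T_r \le \tilde T_r$ pathwise; a direct computation gives $\Ex \tilde T_r = \delta^{-1} \sum_{m=r+1}^n 2/(m(m-1)) \le 2/(r\delta)$, so $\Pr(N(t) > r) = \Pr(T_r > t) \le \Ex T_r / t$ by Markov.

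For (iii), summing (ii) in $\Ex N(t) = \sum_r \Pr(N(t) > r)$ would cost a spurious $\log n$ factor, so I would derive the sharper $n$-free bound directly from the ODE for $m(s) := \Ex \tilde N(s)$. The forward equations combined with Jensen's inequality for the convex function $x \mapsto x(x-1)$ yield
\[ m'(s) \;=\; -\Ex\binom{\tilde N(s)}{2} \;\le\; -\tfrac{1}{2}\,m(s)\bigl(m(s)-1\bigr). \]
Writing $u = m-1 \ge 0$ gives $u' \le -u^2/2$; integrating shows $u(s) \le 2/s$ uniformly in $n$, so $\Ex N(t) \le 1 + 2/(\delta t)$, from which the advertised form $\Ex N(t) \le C/(t\delta)$ follows in the nontrivial regime $t\delta \lesssim 1$.

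For (iv), $(X_i(t))$ is a martingale (Lemma~\ref{L:mg}(i)) bounded by the conserved total money $S$ (with $S=n$ in the simple case and $S=1$ in the standardized case), and $T < \infty$ a.s.\ by (i). Optional stopping gives $\Ex X_i(T) = X_i(0)$; at fixation the winner $L$ holds everything, so $X_i(T) = S \cdot \mathbf{1}\{L=i\}$, and equating expectations gives $\Pr(L=i) = X_i(0)/S$, which is $1/n$ in the simple case and $x_i$ in the standardized case. The main technical obstacle is the uniform-in-$n$ bound in (iii): the Kingman coupling plus Markov-style tail bounds alone only delivers a logarithm, and one really needs the ``coming down from infinity'' phenomenon for Kingman, captured here by the nonlinear ODE above.
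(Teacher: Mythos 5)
Your argument is correct, and for parts (i), (ii) and (iv) it is essentially the paper's proof: comparison of the hitting times $T_{(r)}=\min\{t:N(t)\le r\}$ with those of the Kingman chain run at pair-rate $\delta$ (using that each meeting of two solvent agents decreases $N$ by exactly one, at conditional intensity at least $\delta\binom{N}{2}$), Markov's inequality for (ii), and optional stopping of the bounded martingale $X_i(\cdot)$ at the a.s.\ finite time $T$ for (iv). The only genuine divergence is in (iii). The paper handles it in one sentence: compute $\var\bigl(T^{\mathrm{King}}_{(r)}\bigr)$, which is of order $r^{-3}$, and apply Chebyshev's inequality to $\Pr(N(t)>r)=\Pr(T_{(r)}>t)$ before summing over $r$; that is how it avoids the logarithm you correctly identify as the obstacle. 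You instead bound the mean block count directly via the forward equation and Jensen's inequality, $m'(s)\le-\tfrac12 m(s)(m(s)-1)$, integrating to $\Ex \tilde N(s)\le 1+2/s$ uniformly in $n$ --- the ``coming down from infinity'' bound for Kingman. This is a clean alternative that gives an explicit constant, and you could even dispense with the monotone coupling by applying the same differential inequality to $\Ex N(t)$ itself, since the conditional jump intensity of $N$ is at least $\delta\binom{N(t)}{2}$. Your caveat about the regime $t\delta\lesssim 1$ is also apt: since $N(t)\ge 1$ always, the bound $C/(t\delta)$ cannot hold literally for all $t$ with a numerical constant, and your explicit $1+2/(\delta t)$ makes the intended content precise, a point the paper passes over silently.
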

\begin{proof}
Although the process $(N(t))$ is typically not Markov, when 
$N(t) = m$ the conditional intensity of a transition $m \to m-1$ is at least
$\delta {m \choose 2}$, so (i) follows by comparison with 
the Kingman chain result (\ref{Kingman:eq}).
Similarly, write $T_{(r)} = \min \{t:  N(t) \le r\}$ and 
$T^{{\mbox{{\tiny King}}}}_{(r)}$ for the corresponding quantity for the 
Kingman chain. Then
\begin{eqnarray*}
\Pr (N(t) > r) &=& \Pr(T_{(r)} > t) \\
& \le&  t^{-1} \Ex T_{(r)}\\
& \le & \delta^{-1} t^{-1} \Ex T^{{\mbox{{\tiny King}}}}_{(r)}  \mbox{ by comparison with 
the Kingman chain.}
\end{eqnarray*}
And 
\[ \Ex T^{{\mbox{{\tiny King}}}}_{(r)} = \sum_{m=r+1}^n 1/{m \choose 2} \le 2/r . \]
So (ii) follows by comparison.  A similar argument, calculating 
$\var (T^{{\mbox{{\tiny King}}}}_{(r)}) $ and using Chebyshev's inequality, 
establishes (iii).
Assertion (iv) follows from the martingale property (Lemma \ref{L:mg}(i)) 
of $(X_i(t))$, 
applying the optional sampling theorem at time $T$.
\end{proof}

\subsection{The augmented process}
\label{sec:augment}
Given a probability distribution $\pi = (\pi_i)$ on the set $\Agents$ of $n$ agents with each $\pi_i> 0$,
take independent random variables $\eta_i$ with Exponential($\pi_i$) 
distributions.  
Define a random ordering $\prec$ on $\Agents$ by
\begin{equation}
 i \prec j \mbox{ if } \eta_i < \eta_j . 
\label{defRO}
\end{equation}
This is one of several equivalent definitions of the 
{\em size-biased random ordering} \cite{MR1659532} associated with $\pi$. For instance, defining a random bijection $F: \{1,\ldots,n\} \to 
\Agents$ by 
\begin{eqnarray*}
 \Pr(F(1) = i) &=&\pi_i \\
 \Pr(F(2) = j|F(1) = i) &=& \pi_j/(1-\pi_i), \ j \ne i \\
 \Pr(F(3) = k|F(1) = i,F(2)=j) &=& \pi_k/(1-\pi_i - \pi_j), \ \{i,j,k\} \mbox{ distinct} \\
\ldots&& 
\end{eqnarray*}
and so on, 
then the size-biased random ordering could be defined as 
\begin{equation}
 i \prec j \mbox{ if } F^{-1}(i)  < F^{-1}(j) . 
\label{F-def}
\end{equation}
We want to consider the standardized CG process with some initial
configuration $\bx(0) = (x_i(0))$. 
Take the size-biased random ordering $\prec$ on  $\Agents$ associated with the probabiity distribution $\bx(0)$.
Conditional on the realization of $\prec$, 
we can define a variation of the CG process in which, when two agents $i,j$  with non-zero money meet, the winner is always the agent who comes earlier in $\prec$ 
(if $i \prec j$ then $i$ is the winner). 
In other words, the transition rates (\ref{def:rates}) become
\begin{equation}
  \bx \to \bx^{(j,k)} \mbox{ at rate } \nu_{jk} \mbox{ if } 
\min(x_j,x_k) > 0 \mbox{ and } j \prec k . 
\label{def:rates-aug}
\end{equation} 
Call this the {\em augmented process} $(\bX(t), \prec)$ 
with initial state $(\bx(0), \prec)$.  
Note that the random order $\prec$ does not change with time.
See below for a way of visualizing this process in the simple setting.
The next lemma says that unconditionally, that is when we do not see 
the realization of $\prec$, we see the CG process.
\begin{lemma}
\label{L:augment}
In the augmented process $((\bX(t), \prec), 0 \le t < \infty)$ 
with initial state $(\bx(0), \prec)$, the component $(\bX(t), 0 \le t < \infty)$ evolves as the 
standardized CG process with initial configuration $\bx(0)$. 
\end{lemma}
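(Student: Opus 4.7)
The plan is to establish the following invariance and deduce the lemma from it: at every $t \ge 0$, conditional on the history $(\bX(s))_{s \le t}$ of the marginal process, the restriction of the random ordering $\prec$ to the current solvent set $S(t) := \{i : X_i(t) > 0\}$ is distributed as the size-biased random ordering on $S(t)$ with weights $(X_i(t))_{i \in S(t)}$.

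At $t = 0$ the invariance holds by construction, since $\prec$ is defined to be size-biased with weights $\bx(0)$ on all of $\Agents$. Between meeting events neither $\bX$ nor $\prec$ changes, so the invariance is trivially preserved, and I only need to check it across a meeting $\{j,k\}$ with $j,k \in S(t^-)$. By the augmented rule (\ref{def:rates-aug}), if $j \prec k$ then $j$ wins, so $X_j(t) = X_j(t^-) + X_k(t^-)$, $X_k(t) = 0$, and $S(t) = S(t^-) \setminus \{k\}$. Using the Exponential representation (\ref{defRO}) with $\eta_i \sim \mathrm{Exp}(X_i(t^-))$ independent for $i \in S(t^-)$, the core computation is that, conditional on the event $\{\eta_j < \eta_k\}$: (a) $\eta_j$ has distribution $\mathrm{Exp}(X_j(t^-) + X_k(t^-))$, (b) for $i \in S(t^-) \setminus \{j,k\}$ the variable $\eta_i$ still has distribution $\mathrm{Exp}(X_i(t^-))$, and (c) these variables are independent. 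This is a standard consequence of the memoryless property: $\min(\eta_j,\eta_k) \sim \mathrm{Exp}(X_j(t^-)+X_k(t^-))$ is independent of $\arg\min$, and on $\{\eta_j<\eta_k\}$ we have $\eta_j = \min(\eta_j,\eta_k)$, while the $\eta_i$ for $i \ne j,k$ are independent of $(\eta_j,\eta_k)$ so unaffected by the conditioning. But this is precisely the Exponential representation of the size-biased ordering on $S(t)$ with weights $(X_i(t))_{i \in S(t)}$, so the invariance is preserved across meeting events. Induction on successive meeting events completes the proof of the invariance.

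Granted the invariance, the lemma is immediate. In the augmented process, the transition $\bx \to \bx^{(j,k)}$ occurs at rate $\nu_{jk} \mathbf{1}_{\{j \prec k\}}$ whenever $\min(x_j,x_k) > 0$, and no other transitions are possible. Conditioning on the history of $\bX$ up to time $t$ and using the pairwise consequence of the invariance, namely $\Pr(j \prec k \mid \mathrm{history}) = x_j/(x_j+x_k)$ (which follows from $\Pr(\eta_j < \eta_k) = x_j/(x_j+x_k)$ for independent Exponentials with these rates), the marginal rate of $\bx \to \bx^{(j,k)}$ equals $\nu_{jk}\, x_j/(x_j+x_k)$, matching (\ref{def:rates}). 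Since the state space and rate structure agree, the marginal $(\bX(t))$ is the standardized CG process with initial configuration $\bx(0)$.

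The main obstacle is the inductive step in the invariance, which reduces entirely to the Exponential bookkeeping: that conditioning on $\{\eta_j < \eta_k\}$ replaces $\eta_j$ by an $\mathrm{Exp}(X_j(t^-)+X_k(t^-))$ variable while leaving the remaining $\eta_i$ independent with unchanged distributions. This is routine but must be stated carefully so that the updated weights on $S(t)$ coincide exactly with the post-meeting fortunes, which is what makes the size-biased property propagate.
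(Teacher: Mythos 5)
Your proof is correct and follows essentially the same route as the paper: you establish exactly the paper's inductive claim that, conditional on the history of $\bX$, the exponentials $(\eta_i)$ attached to solvent agents are independent with rates $X_i(t)$, using the same memoryless-property facts (\ref{elem1})--(\ref{elem2}), and then read off the transition rate $\nu_{jk}\,x_j/(x_j+x_k)$. No gaps; the only difference is phrasing the invariance in terms of the size-biased ordering on the solvent set rather than directly on the exponential variables, which is equivalent.
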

\begin{proof}
Recall the elementary facts that, for independent 
Exponential r.v.'s $\eta_1, \eta_2$ with rates 
$\lambda_1, \lambda_2$,
\begin{equation}
\Pr(\eta_1 < \eta_2) = \sfrac{\eta_1}{\eta_1 + \eta_2} \label{elem1}
\end{equation}
\begin{equation}
\mbox{the conditional dist. of $\eta_1$ given $\eta_1 < \eta_2$ 
is Exponential$(\eta_1 + \eta_2)$.} \label{elem2}
\end{equation}
Implement the augmented process using the order $\prec$ at (\ref{defRO}) given by 
independent Exponential($x_i(0)$) r.v.'s $(\eta_i, i \in \Agents)$. 
Write $\FF(t) = \sigma(\bX(s), 0 \le s \le t)$, and note this does not include 
the random order $\prec$.  
We claim that for each $t$
\begin{quote}
conditional on $\FF(t)$, the r.v.'s 
$(\eta_i: \ i \in \Agents, X_i(t) > 0)$ 
are independent Exponentials with rates $X_i(t)$.
\end{quote}
It is enough to check this remains true inductively over meetings. 
If agents $i$ and $j$ meet at $t$ with non-zero fortunes 
$X_i(t-)$ and $X_j(t-)$, then by the evolution rule for the augmented 
process
\begin{quote}
on the event $\{\eta_i < \eta_j\}$ we have 
$X_i(t) = X_i(t-) + X_j(t-)$ and $X_j(t) = 0$
\end{quote}
and similarly on the complementary event.  By inductive hypothesis $\eta_i$ and $\eta_j$ are independent 
Exponentials of rates $X_i(t-)$ and $X_j(t-)$; fact (\ref{elem2}) then says 
that $\eta_i$ has Exponential $X_i(t)$ distribution and the induction goes through.

Having established the claim, consider again what happens when 
agents $i$ and $j$ meet at $t$ with non-zero fortunes 
$X_i(t-)$ and $X_j(t-)$.  The probability that the update is to
$X_i(t) = X_i(t-) + X_j(t-)$ and $X_j(t) = 0$ 
(and similarly for the complementary event) is the probability of the event $\{\eta_i < \eta_j\}$, 
which by the claim and (\ref{elem1}) equals $X_i(t-)/( X_i(t-) + X_j(t-) )$.  
But this is the dynamics of the CG process.
\end{proof}

See \cite{lanoue-MC} for uses of this result in the context of the metric coalescent.

\subsection{The token process}
\label{sec:token}
For the special case of a {\em simple} CG process, there is a 
more concrete and informative  expansion  of the notion of {\em augmented process} above.
First, here is a story which might help visualize what is going on.   
(In talks we ask several audience members to each place an actual currency note on the table, so we can demonstrate the story.)
Real-world currency notes have serial numbers; imagine each agent starting with one note with a random serial number, so that the ranking (smallest to largest) of the $n$ notes is uniform random.  
When two agents with non-zero money meet, we specify that the agent who wins the game is determined as the agent who  possesses, in their collection at that time, the smallest-ranked note.  The winner adds the loser's notes 
to his pile of notes.

In the story, each agent has a set of notes, but what is relevant is not the precise serial numbers but the relative rankings of each of the $n$ serial numbers.
In the formalization below,  $(S_i(t))$ is the set of rankings of all the notes owned by agent $i$ at time $t$.

Note this story is 
consistent with the ``simple" case of Lemma \ref{L:augment}, 
which is essentially the context where we record only the relative orders of each agent's {\em smallest-ranked} note.  But in contrast to Lemma \ref{L:augment}, 
what we do next is useful only in the ``simple" context.

To formalize the story above,
given meeting rates $(\nu_{ij}, i,j \in \Agents)$ 
we first take a uniform random bijection 
$F:\{1,\ldots,n\} \to \Agents$. 
Visualize {\em tokens} $1,\ldots,n$ being randomly dealt to the agents. 
Define a process
$\bS(t) = (S_i(t), i \in \Agents)$ 
to have initial configuration
\[ S_i(0) = \{ F^{-1}(i) \}, \ i \in \Agents \]
and transition rates (copying (\ref{def:rates-aug}))
\begin{equation}
  \bS \to \bS^{(j,k)} \mbox{ at rate } \nu_{jk} \mbox{ if 
$S_j$ and $S_k$ non-empty and }
\min S_j < \min S_k 
\label{Sjk}
\end{equation}
where 
\[ S^{(j,k)}_j = S_j \cup S_k, \ S^{(j,k)}_k = \emptyset, \ S^{(j,k)}_i = S_i 
\mbox{ for } i \ne j,k . \]
So $S_i(t)$ is just the set of tokens held by agent $i$ at time $t$, 
and at a meeting the game is always won by the owner of the smallest (lowest-ranked) token.
Call $(\bS(t), 0 \le t < \infty)$ the {\em token process}, 
and write 
\[ \bX(t) = ( |S_i(t)|, \ i \in \Agents); \quad 
\FF(t) = \sigma(\bX(s), \ 0 \le s \le t) . \]
As discussed above, 
Lemma \ref{L:augment} implies 
\begin{corollary}
\label{C:set-aug}
In the token process $(\bS(t), 0 \le t < \infty)$, the process 
$\bX(t) := ( |S_i(t)|, \ i \in \Agents)$ evolves as
 the simple CG process.
\end{corollary}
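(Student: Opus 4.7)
The plan is to reduce the corollary to Lemma \ref{L:augment} by exhibiting a natural coupling between the token process and the augmented process with a particular size-biased ordering.

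First I would switch momentarily to the standardized version in which every agent starts with fortune $1/n$; since the transition rates (\ref{def:rates}) are invariant under scaling the fortunes by a constant, the standardized CG and the simple CG are identical as processes on configurations up to multiplication by $n$, and in particular the joint law of $\bX(t)/n$ is the same in both. Apply Lemma \ref{L:augment} with this initial distribution $\pi_i = 1/n$. The size-biased ordering $\prec$ associated to the uniform distribution is simply a uniformly random total order on $\Agents$, equivalently the order induced by a uniform random bijection $F:\{1,\dots,n\}\to\Agents$ via (\ref{F-def}).

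Next I would define, from the token process, an ordering $\prec$ on $\Agents$ by declaring $i \prec j$ iff $F^{-1}(i) < F^{-1}(j)$, where $F$ is the uniform bijection used to initialize the token process. By construction this $\prec$ has exactly the law of the size-biased ordering for $\pi_i = 1/n$. The heart of the argument is then the elementary invariant
\[
\min S_i(t) = F^{-1}(i) \quad \text{for every solvent agent } i \text{ and every } t \ge 0,
\]
which I would prove by induction on meeting times: it holds at $t=0$, and if $j$ and $k$ are both solvent at a meeting time with $\min S_j < \min S_k$, then the rule (\ref{Sjk}) replaces $S_j$ by $S_j \cup S_k$, whose minimum is still $\min S_j$, while all other agents are untouched. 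Consequently the token-process rule ``winner = owner of smallest token'' coincides, throughout the process, with the augmented-process rule ``winner = earlier in $\prec$'' (\ref{def:rates-aug}).

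Combining these observations, the pair $(\bX(t),\prec)$ constructed from the token process has exactly the law of the augmented process of Lemma \ref{L:augment} with initial data $(\bx(0),\prec)$ where $x_i(0)=1/n$. That lemma then says the marginal $(\bX(t))$ is the standardized CG process, hence (after the trivial rescaling) $(\bX(t))$ in the simple sense is the simple CG process, as required. The only step with any content is the induction identifying $\min S_i(t)$ with $F^{-1}(i)$; every other ingredient is definitional or cited from Lemma \ref{L:augment}. I do not anticipate a genuine obstacle — the main task is to present this coupling cleanly so that the passage from ``smallest-ranked token'' to ``size-biased $\prec$'' is transparent.
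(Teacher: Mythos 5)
Your proposal is correct and follows essentially the same route as the paper: the paper simply asserts that Lemma \ref{L:augment} implies the corollary, having already noted that the token story is the ``simple'' case of that lemma with the winner rule read off from each solvent agent's smallest-ranked note, which is exactly your coupling via the uniform (size-biased for $\pi_i=1/n$) order induced by $F$. Your explicit induction showing $\min S_i(t)=F^{-1}(i)$ for solvent agents, together with the scale-invariance of the rates (\ref{def:rates}), just fills in the details the paper leaves implicit.
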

As mentioned earlier, Corollary \ref{C:set-aug}  plays a key role in the
development of the metric coalescent in \cite{lanoue-MC}. 
And we will see in sections \ref{sec:ep} and \ref{sec:sparse} how it enables us to use simple intuitive arguments in our discrete setting.
Corollary \ref{C:set-aug} is reminiscent of the Donnelly-Kurtz look-down construction
\cite{MR1681126} but we do not see a precise connection.

Lemma \ref{L:set-augment} below says:  if we just see the number of tokens that each agent has, then the assignment of tokens to agents
is uniform over possible assignments.  
\begin{lemma}
\label{L:set-augment} 
In the token process, for each $t$,  the conditional distribution 
of $(S_i(t), i \in \Agents)$  given $\FF(t)$ 
is uniform over all partitions $(B_i, i \in \Agents)$ of $\{1,\ldots,n\}$  with 
$|B_i| = X_i(t) \ \forall i$.
\end{lemma}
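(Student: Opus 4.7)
The plan is to prove this by induction on the number of effective meetings (between two solvent agents) that have occurred by time $t$. Between such meetings $\bS$ is constant and $\bX$ is constant, so uniformity is preserved for free, and the only real work is checking that each meeting preserves the conditional uniform law.

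\emph{Base case.} At $t = 0$ we have $S_i(0) = \{F^{-1}(i)\}$ with $F$ a uniform bijection, and partitions of $\{1,\ldots,n\}$ into labeled singletons are in bijection with bijections $\{1,\ldots,n\} \to \Agents$; so $(S_i(0))$ is uniform on the set of partitions with sizes $X_i(0)=1$.

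\emph{Inductive step.} Let $\tau$ be a meeting time between two solvent agents, which we may call $a$ and $b$; from the single jump of $\bX$ at $\tau$ the pair $\{a,b\}$ and the identity of the winner (WLOG $a$) are $\FF(\tau)$-measurable. Write $x_a := X_a(\tau-)$, $x_b := X_b(\tau-)$. By the inductive hypothesis, together with the independence of the exogenous Poisson meeting process from the initial bijection $F$, the conditional law of $\pi := \bS(\tau-)$ given $\FF(\tau-)$ and the event ``$a$ meets $b$ at time $\tau$'' is uniform over partitions of $\{1,\ldots,n\}$ with block sizes $(X_i(\tau-))$. The deterministic update $\pi \mapsto \pi' := \bS(\tau)$ replaces the pair of blocks $(S_a,S_b)$ by $(S_a \cup S_b,\emptyset)$ and leaves the other blocks alone. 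Fix any target $\pi'_0$ with the correct post-merger sizes and merged block $C$ of size $x_a+x_b$: the fine partitions $\pi$ that map to $\pi'_0$ and also satisfy $E_a := \{\min S_a < \min S_b\} = \{\min C \in S_a\}$ are in bijection with $(x_a-1)$-subsets of $C \setminus \{\min C\}$, giving a count of $\binom{x_a+x_b-1}{x_a-1}$ which does \emph{not} depend on $\pi'_0$. Since $\pi$ is uniform on all fine partitions, the conditional law of $\pi'$ given $\FF(\tau)$ is therefore uniform over partitions with sizes $(X_i(\tau))$, completing the induction.

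The one subtle point, and the place to be careful, is the conditioning on $E_a$, which really is a property of the full partition $\pi$ and a priori could bias which coarse partition $\pi'$ we end up seeing. What saves the argument is the combinatorial identity above: $E_a$ holds on exactly the fraction $x_a/(x_a+x_b)$ of fine partitions refining each given coarse one, \emph{uniformly in the coarse partition}. So conditioning on $E_a$ reweights the fine partitions but pushes forward to the uniform measure on coarse partitions, which is precisely what the lemma asserts.
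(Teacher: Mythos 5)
Your proof is correct and takes essentially the same route as the paper's: induction over meetings, using that the exogenous meeting event is independent of the current partition given the filtration, and that merging two blocks of a uniform random partition---together with conditioning on which block contains the minimum---again yields a uniform partition with the new block sizes. Your explicit count $\binom{x_a+x_b-1}{x_a-1}$, uniform over target coarse partitions, is precisely the ``elementary fact about merging components of uniform random partitions'' that the paper states and leaves to the reader, so you have simply supplied the omitted verification.
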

\begin{proof}
As in the proof of Lemma \ref{L:augment},
it is enough to check that the assertion remains true inductively over meetings.
Given that $S_{j_1}(t)$ and $S_{j_2}(t)$ are non-empty, the event of a meeting of $(j_1,j_2)$ in $(t,t+dt)$ 
is independent of $(S_{j_1}(t),S_{j_2}(t))$. 
Such a meeting causes either $S_{j_1}$ or $S_{j_2}$ to become  $S_{j_1}(t) \cup S_{j_2}(t)$
and the other to become empty.  Now checking that the induction goes through reduces to checking the following 
elementary fact about merging components of uniform random partitions, which we leave to the reader.

Take $(n_i, i \in I)$ with each $n_i \ge 1$ and $\sum_i n_i = n$.
Take two elements $j_1,j_2$ of $I$, write $j_0$ for a new symbol and let 
$I^\prime := (I \setminus \{j_1,j_2\}) \cup \{j_0\}$ 
and $n_{j_0} = n_{j_1} + n_{j_2}$.  
Take a uniform random partition $(B_i, i \in I)$ of $\{1,\ldots,n\}$ into 
components with $|B_i| = n_i \ \forall i \in I$. 
Construct a random partition  $(B^\prime_i, i \in I^\prime)$ by setting 
$B_{j_0} = B_{j_1} \cup B_{j_2}$ and $B^\prime_i = B_i$ for other $i$.  
Then \\
(i) $(B^\prime_i, i \in I^\prime)$ is a uniform random partition of $\{1,\ldots,n\}$ into 
components with $|B^\prime_i| = n_i \ \forall i \in I^\prime$; \\
(ii) The event ``the minimum element of $B_{j_1}$ is smaller than the minimum element of $B_{j_2}$"
is independent of the random partition 
$(B^\prime_i, i \in I^\prime)$.

In applying this fact in our setting, the point is that the information revealed by the change in $\bX(\cdot)$ at the meeting is precisely 
the identity of $j_1,j_2$ and whether the event in (ii) occurs, but conditioning on these does not 
destroy uniformity. 
\end{proof}
A more detailed treatment is given in \cite{lanoue-MC} section 2.4.
%xxx check section number above

\subsection{Moment calculations}
Lemma  \ref{L:set-augment} allows us to do various calculations with a simple CG process, such as the second-moment calculations below.  Recall that 
(by the martingale property) we know $\Ex X_i(t) \equiv 1$.
\begin{lemma}
\label{L:m12}
For a simple CG process,
\begin{equation}
 \Ex [ X_i(t) X_j(t) ] = \exp(- \nu_{ij}t) , \ j \ne i .
\label{temp-1}
\end{equation}
\begin{equation}
  \Ex [ X_i(t)  (X_i(t) -1) ] = \sum_{j \ne i}  (1 -  \exp(- \nu_{ij}t)) . 
\label{temp-2}
\end{equation}
So for $f:\Agents \to \Reals$ we have
\begin{equation}
\mbox{\small $
\Ex \left( \sum_i f_iX_i(t) \right)^2 
= \sum_i f_i^2 \left[ 1 + \sum_{j\ne i} (1 - \exp(-\nu_{ij}t) ) \right] 
+ \sum_i \sum_{j \ne i} f_if_j \exp(-\nu_{ij}t) ) 
$}
. \label{2-moment}
\end{equation}
\end{lemma}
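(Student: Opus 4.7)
The plan is to apply Lemma \ref{L:set-augment} to the positions of the two smallest-labelled tokens in the token process: each of (\ref{temp-1}) and (\ref{temp-2}) then reduces to computing a single probability in two different ways.

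First I would pin down the token dynamics for tokens $1$ and $2$. Agent $F(1)$ starts with $\min S_{F(1)}(0)=1$ and, since rule (\ref{Sjk}) only ever appends to a pile another pile with strictly larger minimum, the king $F(1)$ wins every meeting it takes part in; so $F(1)$ stays solvent for all time and always holds token $1$. Similarly, while $F(2)$ is solvent its pile contains $2$ but not $1$, so $\min S_{F(2)}=2$, and the only way $F(2)$ can lose is by meeting $F(1)$, an event whose first occurrence is $\mathrm{Exp}(\nu_{F(1),F(2)})$-distributed independently of $F$. Hence for fixed distinct $i,j\in\Agents$,
\begin{align*}
\Pr\bigl(1 \in S_i(t),\,2 \in S_j(t)\bigr) &= \Pr(F(1)=i,\,F(2)=j)\,e^{-\nu_{ij}t} = \frac{e^{-\nu_{ij}t}}{n(n-1)}, \\
\Pr\bigl(1 \in S_i(t),\,2 \in S_i(t)\bigr) &= \sum_{j\ne i}\Pr(F(1)=i,\,F(2)=j)\bigl(1-e^{-\nu_{ij}t}\bigr) = \sum_{j\ne i}\frac{1-e^{-\nu_{ij}t}}{n(n-1)}.
\end{align*}

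On the other hand, Lemma \ref{L:set-augment} says that conditional on $\FF(t)$ the partition $(S_k(t))_k$ is uniform over partitions of $\{1,\ldots,n\}$ with block sizes $(X_k(t))$, so an elementary ``two distinguishable balls into given-sized boxes'' count gives
\begin{align*}
\Pr\bigl(1\in S_i(t),\,2\in S_j(t)\mid\FF(t)\bigr) &= \frac{X_i(t)X_j(t)}{n(n-1)} \quad (i\ne j), \\
\Pr\bigl(1\in S_i(t),\,2\in S_i(t)\mid\FF(t)\bigr) &= \frac{X_i(t)(X_i(t)-1)}{n(n-1)}.
\end{align*}
Taking expectations and equating with the previous display yields (\ref{temp-1}) and (\ref{temp-2}). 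Formula (\ref{2-moment}) is then a one-line expansion: split $\bigl(\sum_i f_iX_i(t)\bigr)^2$ into diagonal and off-diagonal pieces, use $\Ex X_i(t)^2=\Ex[X_i(t)(X_i(t)-1)]+\Ex X_i(t)$ together with the martingale identity $\Ex X_i(t)\equiv 1$ from Lemma \ref{L:mg}(i), and substitute (\ref{temp-1}) and (\ref{temp-2}).

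The only point of substance is the structural claim about tokens $1$ and $2$. Since rule (\ref{Sjk}) forbids merging a pile into one with larger minimum, the minimum of any pile is preserved while the pile remains non-empty; once this is noted, both the king property of $F(1)$ and the fact that $F(2)$ loses if and only if it meets $F(1)$ follow at once, and the rest of the argument is a routine combination of uniform-bijection counting with Lemma \ref{L:set-augment}.
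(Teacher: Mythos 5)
Your proof is correct and follows essentially the same route as the paper: combining Lemma \ref{L:set-augment} with the token-process dynamics (token $1$ never moves, token $2$ changes hands exactly at the first meeting of $F(1)$ and $F(2)$) to compute $\Pr(1\in S_i(t),\,2\in S_j(t))$ two ways, then expanding the square for (\ref{2-moment}). Your explicit justification of the structural claim about tokens $1$ and $2$ simply spells out what the paper compresses into ``from the dynamics of the token process.''
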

\begin{proof}
 Lemma \ref{L:set-augment} 
gives 
\[ \Pr( 1 \in S_i(t), 2 \in S_j(t) \vert \FF(t)) = \frac{X_i(t)X_j(t)}{n(n-1)} \]
and taking expectation 
\[ \Pr( 1 \in S_i(t), 2 \in S_j(t)) =   \frac{1}{n(n-1)}      \Ex [ X_i(t)  X_j(t) ]  .
\]
From the dynamics of the token process, the event 
$\{ 1 \in S_i(t), 2 \in S_j(t) \}$ happens if and only if 
$F(1) = i, F(2) = j$ and $\tau_{ij} > t$, 
where $\tau_{ij}$ is the first meeting time of $i$ and $j$.  So 
\[ \Pr( 1 \in S_i(t), 2 \in S_j(t)) = \frac{1}{n(n-1)}  \ \Pr(\tau_{ij} > t) .\] 
These last two identities give (\ref{temp-1}).  
One can deduce (\ref{temp-2}) from (\ref{temp-1}) and the ``martingale" 
fact $\Ex |S_i(t)| = 1$, but let us see how it follows by the same kind of argument as above.   Lemma \ref{L:set-augment} 
gives 
\[ \Pr( \{1,2\} \subseteq  S_i(t) \vert \FF(t)) = \frac{X_i(t)(X_i(t)-1)}{n(n-1)} \]
and taking expectation 
\[ \Pr( \{1,2\} \subseteq  S_i(t) ) =  \frac{1}{n(n-1)}      \Ex [ X_i(t)  (X_i(t) -1) ] .
\]
From the dynamics of the token process, the event 
$\{ \{1,2\} \subseteq  S_i(t) \}$ happens if and only if 
$F(1) = i$ and $ F(2) = $ some $j$ for which $\tau_{ij} \le t$, and so 
\[ \Pr( \{1,2\} \subseteq  S_i(t) ) = \frac{1}{n(n-1)}  \ 
\sum_{j \ne i} \Pr(\tau_{ij} \le t) . \]
These last two identities give (\ref{temp-2}).  
Finally, (\ref{2-moment}) follows from  (\ref{temp-1}) and  (\ref{temp-2}) by expanding the square.
\end{proof}

\subsection{Elementary properties of the simple CG process}
\label{sec:ep}
Here we prove the  remaining ``mostly obvious" assertions about the 
simple CG process
from section \ref{sec:outline}, and some minor extensions.
Fix a meeting model $(\nu_{ij})$ on $n$ agents.
Write $G$ for the graph whose edges are the pairs $(i,j)$ with $\nu_{ij}>0$.   
Recall $G$ is connected by assumption.
An {\em anticlique} (or {\em independent set}) in $G$ is a set $A$ of vertices
such that there is no edge with both end-vertices in $A$.  
There is a finite set of configurations $\bx$ that can be reached by the 
simple CG process.  Such a configuration is absorbing if and only if 
$\{i: x_i \ge 1\}$ is an anticlique. 
The process must reach some absorbing configuration at some a.s. finite time $T$,
because  $N(t)$  (the number of solvent agents) decreases by one every time the configuration changes.
Write $\TT$ for the random set of agents with non-zero money at $T$.
\begin{lemma}
\label{L2}
For the simple CG process:\\
(i) $\Ex T \le (n-1)/\delta$, where $\delta:= \min\{\nu_{ij}: \ \nu_{ij} > 0 \}$.\\
(ii) For each pair $\{i,j\}$ with $\nu_{ij} = 0$ we have 
$\Pr ( i \mbox{ and } j \in \TT) \ge \frac{2}{n(n-1)}$. \\
(iii) $\Pr (i \in \TT) \ge \frac{1}{1+d(i)}$ and so $\Ex |\TT| \ge \sum_i \sfrac{1}{1+d(i)}$, 
where  $d(i)$ is the degree of vertex $i$ in $G$.\\
(iv) $\Pr (|\TT| = 1) > 0$.
\end{lemma}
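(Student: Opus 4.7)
Proof plan. I will prove (i) and (iv) by direct arguments on the continuous-time Markov chain, and (ii)--(iii) by exploiting the token process of Corollary \ref{C:set-aug} together with one structural fact about how ``small'' tokens can move.

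For (i), note that $N(t)$ decreases by exactly one at every transition and can decrease at most $n-1$ times; meanwhile, in any non-absorbing configuration the solvent set is not an anticlique, so some edge $\{j,k\}$ of $G$ has both endpoints solvent, contributing rate $\nu_{jk} \ge \delta$ to the next transition. So each inter-transition holding time has mean at most $1/\delta$, and (i) follows by summing. For (iv), fix a vertex $r$ and a spanning tree of $G$ rooted at $r$, order the non-root vertices in reverse-BFS order $v_1,\ldots,v_{n-1}$ with $p_k$ the parent of $v_k$, and consider the event that the first $n-1$ jumps of the chain are the meetings $(v_k,p_k)$ in order, each won by $p_k$. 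At every intermediate step, $v_k$ still has its initial unit and $p_k$ has accumulated money from its already-merged descendants, so both are solvent; each such jump has positive conditional rate and positive conditional win-probability, so the joint event has strictly positive probability, and at its end all $n$ units sit at $r$, giving $\Pr(|\TT|=1) > 0$.

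For (ii)--(iii) I will use the following structural fact about the token process: at every time $t$, every token with label $\le k$ is held by an agent in the initial-holder set $\{F(1),\ldots,F(k)\}$. This holds because the winner of every meeting has strictly smaller minimum-token than the loser, so a label $\le k$ can only migrate to an agent whose current minimum-token was already smaller and who is, by induction, itself in $\{F(1),\ldots,F(k)\}$. Granted this, (ii) is immediate: on the event $\{F(1),F(2)\}=\{i,j\}$ (probability $2/(n(n-1))$), agent $F(1)$ keeps token $1$ forever, so the only way $F(2)$ could lose token $2$ is by meeting $F(1)$, which is impossible since $\nu_{ij}=0$; hence both $i,j \in \TT$. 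For (iii), let $A = \{i\} \cup N(i)$ where $N(i)$ is the neighbor set of $i$ in $G$, and let $\tau = \min F^{-1}(A)$; by symmetry of the uniform bijection $F$ restricted to $A$, $\Pr(F(\tau)=i) = 1/|A| = 1/(1+d(i))$. On that event no neighbor of $i$ lies in $\{F(1),\ldots,F(\tau-1)\}$, so by the structural fact no neighbor of $i$ ever holds a label $< \tau$; thus $i$'s minimum-token remains $\tau$ throughout and strictly beats the minimum-token of any neighbor at any meeting, so $i$ always wins and $i \in \TT$. The bound on $\Ex|\TT|$ follows by linearity of expectation.

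The main obstacle is justifying the structural fact for tokens used in (ii)--(iii); the remaining arguments are essentially bookkeeping once it is in hand. The induction over meeting times is straightforward but must carefully track both the ``no new holders of small labels'' invariant and the fact that an agent's minimum-token can only decrease over time, never increase.
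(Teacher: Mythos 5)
Your proposal is correct and follows essentially the same route as the paper: (i) by comparing holding times with a rate-$\delta$ clock, (ii) and (iii) via the token process on the events that $i,j$ hold tokens $1,2$, respectively that $i$ holds the smallest token in $\{i\}\cup N(i)$, and (iv) via a leaf-first ordering of a spanning tree realized by a positive-probability sequence of meetings and wins. Your ``structural fact'' (labels $\le k$ stay among $\{F(1),\ldots,F(k)\}$) is simply a careful justification of the step the paper leaves implicit, namely that on these events the designated agents can never lose a game.
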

\noindent
Note that, if $\nu_{ij} = 0$ for some pair $\{i,j\}$, 
then (ii)  implies $\Pr (|\TT| = 1) < 1$.

\begin{proof}
If $N(t) = m$ and the configuration is not an anticlique then 
the conditional intensity of a transition $m \to m-1$ is at least
$\delta$, implying (i) by comparison with the pure death process with 
constant transition rate $\delta$.
For (ii) consider the token process from section \ref{sec:token}.
If $\nu_{ij} = 0$ then with probability $1/{n \choose 2}$  agents
$i$ and $j$ have tokens $1$ and $2$; if so,
then neither can lose a game, so both must end in $\TT$.
Similarly for (iii), with probability $1/(1+d(i))$ agent $i$'s token is smaller  than all its $d(i)$ neighbors' tokens; if so, then agent $i$ cannot lose a game, 
implying $i \in \TT$.
So $\Pr(i \in \TT) \ge 1/(1+d(i))$, which is (iii).
For (iv), consider a spanning tree for $G$.
We can order its edges as 
\[ e_1 = (\ell_1,v_1), \ e_2 = (\ell_2,v_2), \ \ldots \ 
   e_{n-1} = (\ell_{n-1},v_{n-1}) \]
in such a way that each $\ell_i$ is a leaf of the subtree in which 
edges $e_1,\ldots,e_{i-1}$ have been deleted.  
With non-zero probability, the first $n-1$ meetings in the meeting process
are over the edges $e_1, \ldots, e_{n-1}$ in that order; 
and with non-zero probability, the game involving $(\ell_i, v_i)$ is won by 
$v_i$ for each $i$.  
If this happens then $v_{n-1}$ ends up with all the money.
\end{proof}

\section{The sparse graph setting}
\label{sec:sparse}
Consider a connected finite graph $G$ with $n$ vertices and which is $r$-regular, for $r \ge 3$ (so if $r$ is odd then $n$ must be even).  Take the set $\Agents$ as the vertices of $G$, and the meeting rates as 
\begin{eqnarray}
 \nu_{ij} &=& 1 \mbox{ if $(i,j)$ is an edge} \label{nu-constant} \\
&=& 0 \mbox{ if not}. \nonumber
\end{eqnarray}
As observed in section \ref{sec:outline}, the simple CG process must terminate in a random configuration 
$\bX^*$ with some random set $\TT$ of solvent agents.  
We study the density of solvent agents:
\[ \rho(G) : = n^{-1} \Ex |\TT| . \]
What are the possible values of $\rho(G)$, in terms of $n$ and $r$?
Consider first the lower bound.
\begin{lemma}
\label{LsparseLB}
(i) $\rho(G) \ge \frac{1}{r+1}$. \\
(ii) If $n$ is a multiple of $r$ then there exists a graph $G$ such that
\[ \rho(G) \le \sfrac{1}{r} (1 + \sfrac{2\kappa_r}{r-1})  \]
where $\kappa_r$, defined by (\ref{kappa-r}) below, is such that $\kappa_r \uparrow \kappa_\infty < \infty$ as $r \uparrow  \infty$.
\end{lemma}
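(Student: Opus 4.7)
Part (i) is immediate from Lemma~\ref{L2}(iii): since $G$ is $r$-regular, every vertex has degree $r$, so $\Pr(i \in \TT) \ge 1/(r+1)$ for all $i$, and hence $\rho(G) = n^{-1}\sum_i \Pr(i \in \TT) \ge 1/(r+1)$.

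For part (ii), my plan is to construct a concrete $r$-regular graph and analyze it via the token process of section~\ref{sec:token}. Let $k = n/r$ and take $G^*$ to be the vertex-disjoint union of $k$ copies $C_1,\ldots,C_k$ of $K_r$, augmented by a perfect matching $M$ whose edges all go between distinct cliques. Such an $M$ exists whenever $nr$ is even (the parity condition for any $r$-regular graph) and $n \ge 2r$, and can be chosen so that $G^*$ is connected. Each vertex $v$ then has $r-1$ intra-clique neighbors inside its clique $C(v)$ and exactly one matching partner $v^* \notin C(v)$, so $G^*$ is $r$-regular.

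I would compute $\Pr(v \in \TT)$ by running the simple CG process as the token process and invoking Corollary~\ref{C:set-aug}. Write $E_v$ for the event that $v$'s token rank is the smallest in $C(v)$. On $E_v$ the vertex $v$ certainly accumulates all the money in $C(v)$, and whether $v \in \TT$ then reduces to a two-clique question on the matching edge: is $v^*$ eliminated by a lower-ranked clique-mate of $C(v^*)$ before the first $\{v,v^*\}$ meeting while both are solvent? The probability $\Pr(E_v) = 1/r$ together with this conditional survival probability produces the leading term $1/r$ of the final bound. On the complementary event $E_v^c$ the vertex $v$ can still lie in $\TT$, but only through a more delicate cascade in which the true clique-minimum of $C(v)$ itself loses to its matching partner, and every vertex of $C(v)$ with smaller token than $v$ is similarly eliminated from outside, all before any of them meets $v$. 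Expanding this contribution as a sum over rank patterns and computing the interleaving probabilities for the independent rate-$1$ Poisson meeting clocks supported on $C(v)\cup C(v^*)\cup\{v,v^*\}$, one should recover a correction of the form $\tfrac{1}{r}\cdot\tfrac{2\kappa_r}{r-1}$, with the factor $r-1$ corresponding to the number of intra-clique edges incident to a fixed vertex of $K_r$, and $\kappa_r$ being an explicit expectation that one can show is monotone increasing in $r$ and bounded as $r\to\infty$ (matching the corresponding quantity for the infinite $r$-regular tree, which will be used elsewhere in section~\ref{sec:sparse}).

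The principal obstacle is the bookkeeping in the $E_v^c$ contribution: although the CG process is a priori global, the fact that each vertex has exactly one matching edge means that once the $v^*$ meeting is resolved the dynamics relevant to $v$ are confined to $C(v) \cup C(v^*)$. One must formalize this locality and then carry through the Poisson-interleaving computation; a small secondary point is that the trivial upper bound $\rho(G^*) \le 1/r$ from the independence-number inequality $|\TT| \le k$ (since $\TT$ meets each $C_j$ in at most one vertex) is not enough to identify the correct constants, and one really does need the token-process decomposition. Monotonicity $\kappa_r \uparrow \kappa_\infty$ should then follow from a coupling between $K_r$ and $K_{r+1}$ showing that the survival probability of a non-champion is monotone in $r$, and finiteness of $\kappa_\infty$ from an elementary domination of the intra-clique elimination clocks by the minimum of $r$ independent rate-$1$ exponentials.
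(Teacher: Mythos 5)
Part (i) is fine and is exactly the paper's argument (it simply cites Lemma \ref{L2}(iii)).

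Part (ii) as written has a genuine gap, and ironically the gap sits next to a fact you noticed and then wrongly discarded. With your construction (disjoint copies of $K_r$ plus an inter-clique perfect matching), each block $C_j$ is a complete subgraph, and the absorbing configuration's solvent set is an anticlique (section \ref{sec:ep}); hence $|\TT \cap C_j| \le 1$ deterministically, so $\rho(G^*) \le 1/r$, which already implies the claimed bound for \emph{any} $\kappa_r \ge 0$ --- the statement is a pure upper bound, and the larger $\kappa_r$ is, the weaker the bound. Your remark that this "is not enough to identify the correct constants" misreads what must be proved: the only residual obligation is to exhibit \emph{some} $\kappa_r$ with $\kappa_r \uparrow \kappa_\infty < \infty$ (the paper's is equation (\ref{kappa-r})), and your sketch never actually defines one --- it is deferred to an unperformed "Poisson-interleaving computation." Worse, the computation you outline rests on a false locality claim. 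In the token process a solvent agent's minimal token is always its original token, so for $v$ to survive on $E_v^c$ its lower-ranked clique-mates --- in particular the clique minimum $w$ --- must be eliminated, and $w$ can only be eliminated by its own matching partner $w^*$, which lies in some \emph{other} clique; whether $w^*$ is then solvent depends on that clique and its matching partners in turn. So the relevant randomness is not supported on $C(v)\cup C(v^*)\cup\{v,v^*\}$, and the cascade you describe cannot be computed as stated. (The side claim that on $E_v$ the vertex $v$ "certainly accumulates all the money in $C(v)$" is also false --- $v$ can lose to $v^*$, and clique-mates can lose their fortunes externally before meeting $v$ --- though that is not load-bearing.)

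For contrast, the paper's correction term is not an artifact of sloppiness but of its different construction: its blocks are $K_r$ with one edge $(a_i,b_i)$ deleted, joined in a cycle through the $a_i,b_i$, so the non-adjacent pair $\{a_i,b_i\}$ can both end up in $\TT$ and the anticlique argument only yields $\Ex|\TT\cap C_i| \le 1 + q$ with $q = \Pr(\{a_i,b_i\}\subseteq\TT)$. The token process is then used purely to bound $q \le \frac{2}{r-1}\sum_{m=2}^{r}\sigma_m$, where $\sigma_m$ is an explicit product of ratios of binomial coefficients decaying like $m^{-2}$; that sum is the definition of $\kappa_r$, so monotonicity and finiteness of $\kappa_\infty$ are immediate. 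If you keep your matching-based construction, the honest repair of your proof is to drop the cascade analysis entirely, invoke the anticlique bound $\rho(G^*)\le 1/r$, verify the existence of a connected inter-clique perfect matching (which you assert but do not prove), and note that the stated inequality then holds with the paper's $\kappa_r$ (or any nonnegative bounded increasing sequence).
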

\begin{proof}
Assertion (i) repeats Lemma \ref{L2}(iii). 
For (ii), consider the graph $G$ constructed as follows. 
Take $n/r$ disjoint graphs $C_1, \ldots, C_{n/r}$, each being the complete graph on $r$ vertices with one edge $(a_i,b_i)$ removed. 
Then add edges $(b_1,a_2), (b_2,a_3), \ldots, (b_{n/r}, a_1)$ to make $G$.

For each $1\le i \le n/r$, the only possible way for $\TT$ to contain more than one vertex of $C_i$
is for $\TT$ to contain the two vertices $\{a_i,b_i\}$ (because any other pair of agents in $C_i$ will meet).  It follows that 
\[ \rho(G) \le \sfrac{1}{r} (1 + q) \]
where $q$ is defined as the probability of the event 
$\{a_i,b_i\} \subseteq \TT$.
To study $q$ 
we use the token process representation from section \ref{sec:token}.
The only way that the event 
$\{a_i,b_i\} \subseteq \TT$ can occur is if one of $\{a_i,b_i\}$ has the smallest token amongst agents $C_i$.  So 
\[ q \le \sfrac{2}{r-1} \sum_{m=2}^r q_m \]
where $q_m$ is the probability that $\TT$ does contain both 
$a_i$ and $b_i$, given that $a_i$ has the smallest and $b_i$ has  the $m$'th smallest token amongst agents $C_i$.  
But this latter event can only happen if, 
in the sequence of games involving the agents initially holding these $m$ tokens, $b_i$ is never involved, 
which has probability 
 \[ \prod_{i=0}^{m-3}   \frac{ {m-i-1 \choose 2} }{ {m - i \choose 2} - 1} = 
\sigma_m ,\mbox{ say} . \]
So now we have shown 
\[ q \le \sfrac{2}{r-1} \sum_{m=2}^r \sigma_m  .
\]
But $\sigma_m$ decreases as order $m^{-2}$, establishing the bound in (ii) 
for 
\begin{equation}
\kappa_r =  \sum_{m=2}^r \sigma_m  .
\label{kappa-r}
\end{equation}
\end{proof}

Finding somewhat tight upper bounds complementary to those in Lemma \ref{LsparseLB} 
seems more difficult.  As noted in section \ref{sec:directions}, we can consider the simple CG process on the infinite $r$-ary tree $\Tree_r$, 
and the random set $\TT$ of solvent agents in the $t \to \infty$ limit 
has some density
\[ \rho(\Tree_r) := \Pr(i \in \TT) . \]
It is well-known \cite{MR2650042}  that there exist, for fixed $r \ge 3$, sequences 
$(G_{n,r}, n \ge n_0(r))$ of $r$-regular $n$-vertex connected graphs 
(derived e.g. from typical realizations of random $r$-regular graphs)
which converge in the sense of local weak convergence (Benjamini-Schramm 
convergence) to $\Tree_r$, and for such a sequence we will have
\begin{equation}
 \lim_n \rho(G_{n,r}) = \rho(\Tree_r) . 
\label{LWC1}
\end{equation}
By analyzing the CG process on $\Tree_r$ we will show
(Corollary \ref{C:r-tree}) that $\rho(\Tree_r) \sim 2/r$ as $r \to \infty$.
Granted that result, we can summarize Lemma \ref{LsparseLB} and the discussion above as follows.
\begin{proposition}
Define 
\begin{eqnarray*}
a^*(r) &=& \sup_{(G_{n,r})} \limsup_n \rho(G_{n,r}) \\
a_*(r) &=& \inf_{(G_{n,r})} \liminf_n \rho(G_{n,r}) 
\end{eqnarray*} 
the {\em sup} and {\em inf} over  sequences 
$(G_{n,r}, n \ge n_0(r))$ of $r$-regular $n$-vertex connected graphs. 
Then
\begin{eqnarray*}
a_*(r) &\sim & \frac{1}{r} \mbox{ as } r \to \infty\\
a^*(r) & \ge & \frac{2 - o(1)}{r} \mbox{ as } r \to \infty .
\end{eqnarray*} 
\end{proposition}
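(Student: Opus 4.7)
The plan is to derive the proposition as an immediate synthesis of Lemma~\ref{LsparseLB} and the asymptotic for $\rho(\Tree_r)$ recorded in Corollary~\ref{C:r-tree}, together with the local weak convergence principle (\ref{LWC1}). I would treat the two assertions separately.

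For $a_*(r)$, the lower bound is immediate from Lemma~\ref{LsparseLB}(i): every $r$-regular graph $G_{n,r}$ satisfies $\rho(G_{n,r}) \ge 1/(r+1)$, so $\liminf_n \rho(G_{n,r}) \ge 1/(r+1)$ for every admissible sequence, yielding $a_*(r) \ge 1/(r+1)$. For the matching upper bound I would specialize to the explicit family built in Lemma~\ref{LsparseLB}(ii): for each admissible $n$ (a multiple of $r$) that construction supplies an $r$-regular graph $G_{n,r}$ with $\rho(G_{n,r}) \le \tfrac{1}{r}\!\left(1+\tfrac{2\kappa_r}{r-1}\right)$, a bound uniform in $n$. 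This particular sequence certifies $a_*(r) \le \tfrac{1}{r}\!\left(1+\tfrac{2\kappa_r}{r-1}\right)$. Since $\kappa_r \uparrow \kappa_\infty < \infty$, both bounds are of the form $r^{-1}(1+O(r^{-1}))$, and combining them gives $a_*(r) \sim 1/r$.

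For $a^*(r)$, I would invoke the existence, recalled from \cite{MR2650042}, of a sequence $(G_{n,r})$ of $r$-regular connected graphs converging locally weakly (Benjamini--Schramm) to the $r$-regular tree $\Tree_r$; typical realizations of the uniform random $r$-regular graph work. By (\ref{LWC1}) we then have $\lim_n \rho(G_{n,r}) = \rho(\Tree_r)$, so this particular sequence certifies $a^*(r) \ge \rho(\Tree_r)$. Applying Corollary~\ref{C:r-tree}, which asserts $\rho(\Tree_r) \sim 2/r$, yields $a^*(r) \ge (2 - o(1))/r$.

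The proposition itself is thus little more than an unpacking of definitions. The genuinely hard work sits inside the two black boxes invoked above: Corollary~\ref{C:r-tree}, whose proof demands an explicit analysis of the CG process on $\Tree_r$ to extract the leading constant $2$, and the convergence principle (\ref{LWC1}), which requires continuity of the functional $G \mapsto \rho(G)$ under local weak convergence. The main obstacle in that continuity statement is that $\rho(G_{n,r})$ is defined through the $t \to \infty$ behaviour of the process, so one needs a quantitative argument that the event $\{i \in \TT\}$ is determined, up to small error, by a bounded neighbourhood of $i$; this tail control is the substantive piece and the place where I would expect to spend real effort.
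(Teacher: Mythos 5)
Your proposal is correct and matches the paper's own argument: the paper derives the proposition by exactly this synthesis of Lemma~\ref{LsparseLB}(i)--(ii) for $a_*(r)$ and of the locally tree-like sequences from (\ref{LWC1}) combined with Corollary~\ref{C:r-tree} for $a^*(r)$, treating those ingredients (and the tail/continuity issues behind (\ref{LWC1})) as the substantive work. Your handling of the liminf along the subsequence of admissible $n$ is a harmless refinement of what the paper leaves implicit.
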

We conjecture that in fact
$a^*(r) \sim 2/r$ as $r \to \infty$, in other words that locally tree-like 
graphs are asymptotically extremal for this problem.

\subsection{Finite trees}
Consider the simple CG process on a finite tree $\Tree$, 
with the constant meeting rates (\ref{nu-constant}) over edges. 
We establish a recursion, Lemma \ref{L:rec}, for the distribution of 
$X_{(\Tree,o)}(t)$, the fortune of agent $o$ at time $t$.  
The CG process uses only the first meeting times $\tau_e$ across edges, which are independent with Exponential(1) distribution; by a deterministic time-change we can suppose instead the distribution is 
Uniform(0,1), simplifying calculations below.

 For $0\leq t,z\leq 1$, set 
$$\phi_{(\Tree,o)}(z,t):=1-\Ex\left[z^{X_{(\Tree,o)}(t)}\right].$$
For a neighbor $i$ of $o$ (written $i \sim o$), we let $\Tree_i$ denote the subtree of $\Tree$ 
(as viewed from root $o$)  consisting of $i$ and all its descendants. 
\begin{lemma}
\label{L:rec}
For $0\leq z,t\leq 1$,
\begin{eqnarray*}
\label{eq:rec}
\phi_{(\Tree,o)}(z,t)& = & \int_{z}^1 \prod_{i\sim o}\left(1-\int_{0}^t\phi_{(\Tree_i,i)}(\xi,u)du\right)d\xi. 
\end{eqnarray*}
\end{lemma}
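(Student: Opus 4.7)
The plan is to realize the simple CG process via the token construction of Section~\ref{sec:token}, but using i.i.d.\ $\mathrm{Uniform}(0,1)$ labels $V_i$ (one per vertex) in place of the integer ranks $1,\ldots,n$ and decreeing that at each meeting the winner is the agent holding the token with the \emph{larger} label; this is the same process in distribution since only the ranks of the labels matter for the dynamics. Writing $S^V_v(t)$ for the set of $V$-labels held by agent $v$ at time $t$, my first step is the identity
\[
\phi_{(\Tree,o)}(z,t)\;=\;\Pr\!\bigl(\max S^V_o(t)\ge z\bigr),
\]
with the convention $\max\emptyset=-\infty$. This is immediate from Lemma~\ref{L:set-augment}: conditional on $X_{(\Tree,o)}(t)=k$, the $k$ labels held by $o$ form a uniformly chosen size-$k$ subset of an i.i.d.\ $\mathrm{Uniform}(0,1)$ sample, so their values are themselves $k$ i.i.d.\ $\mathrm{Uniform}(0,1)$; hence $\Pr(\max S^V_o(t)<z\mid X_{(\Tree,o)}(t)=k)=z^k$ for every $k\ge 0$, and averaging recovers $\Ex[z^{X_{(\Tree,o)}(t)}]$.

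Next I would exploit the tree structure. Under the ``larger wins'' rule the value $V_o$ remains the \emph{maximum} of $S^V_o(t)$ as long as $o$ has not lost a meeting, because any token set that $o$ absorbs at a meeting has maximum strictly less than $V_o$ (otherwise $o$ would have lost). Hence $\{\max S^V_o(t)\ge z\}$ coincides with $\{V_o\ge z\}\cap\{\text{$o$ wins every meeting involving itself by time $t$}\}$. Because $\Tree$ is a tree, removing the edge $(o,i)$ disconnects $\Tree_i$ from the rest, so until the first meeting time $\tau_i$ across $(o,i)$ the vertex $i$ interacts only inside $\Tree_i$; consequently the pre-$\tau_i$ state of the global $V$-token process restricted to $\Tree_i$ coincides with the isolated $V$-token process on $\Tree_i$, and the condition that $o$ beats $i$ at $\tau_i$ becomes $V_o>\max S^V_{(\Tree_i,i)}(\tau_i-)$ with the right-hand side now referring to that isolated subtree process.

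Finally, I would condition on $V_o=\eta$ and invoke independence: the subtree processes on the $\Tree_i$ for different $i\sim o$ act on disjoint vertex and edge sets and are jointly independent of $V_o$ and of the edge-meeting times $(\tau_i)_{i\sim o}$; under the paper's time-change each $\tau_i$ is $\mathrm{Uniform}(0,1)$. Applying the identity of the first paragraph \emph{inside the subtree} gives $\Pr\!\bigl(\max S^V_{(\Tree_i,i)}(u)\ge\eta\bigr)=\phi_{(\Tree_i,i)}(\eta,u)$, so
\[
\Pr(\text{$o$ wins all meetings by $t$}\mid V_o=\eta)\;=\;\prod_{i\sim o}\left[1-\int_0^t\phi_{(\Tree_i,i)}(\eta,u)\,du\right],
\]
and integrating against the $\mathrm{Uniform}(0,1)$ density of $V_o$ over $\eta\in[z,1]$ delivers the claimed recursion.

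The main point requiring care is the identification, in the middle paragraph, of the pre-$\tau_i$ state of the global $V$-token process restricted to $\Tree_i$ with the isolated subtree $V$-token process on $\Tree_i$: this is what genuinely uses the tree hypothesis (it would fail on a graph with cycles) and what brings in the crucial independences. Once that is set up, the rest is a routine conditioning-and-integration calculation.
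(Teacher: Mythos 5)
Your argument is correct, but it follows a genuinely different route from the paper's. The paper does not use the token machinery at this point: it introduces the modified process $Y(t)$ in which $o$ wins every game it plays, couples it with the true process so that $X_{(\Tree,o)}(t)\in\{Y(t),0\}$, and then invokes the fair-game (martingale) property $\Ex[X_{(\Tree,o)}(t)\mid Y(t)]=1$ to conclude that $X_{(\Tree,o)}(t)=Y(t)$ with probability $1/Y(t)$ and $0$ otherwise; the recursion then follows from $\phi_{(\Tree,o)}(z,t)=\Ex[(1-z^{Y(t)})/Y(t)]=\int_z^1\Ex[\xi^{Y(t)-1}]\,d\xi$ combined with the subtree decomposition $Y(t)=1+\sum_{i\sim o}X_{(\Tree_i,i)}(\tau_{oi})\mathbf{1}_{\{\tau_{oi}\leq t\}}$ and the Uniform$(0,1)$ meeting times. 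You instead resolve the ``who wins'' randomness by initial randomization: realizing the process via Corollary \ref{C:set-aug} with i.i.d.\ uniform labels, using Lemma \ref{L:set-augment} to identify $\phi_{(\Tree,o)}(z,t)=\Pr(\max S^V_o(t)\geq z)$, and reducing survival of $o$ to the deterministic comparison of $V_o$ with the maxima of the isolated subtree processes at the first crossing times of the edges $(o,i)$. The two computations are in fact the same integral in disguise, since conditionally on the subtree data $\prod_{i\sim o}\bigl(1-\int_0^t\phi_{(\Tree_i,i)}(\eta,u)\,du\bigr)=\Ex\bigl[\eta^{Y(t)-1}\bigr]$, and both proofs share the essential ingredients: the tree structure giving independent, non-interacting subtrees up to the first meeting across $(o,i)$ (the point you correctly single out as delicate), and the time change making each $\tau_{oi}$ Uniform$(0,1)$. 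What the paper's route buys is economy -- it needs only the martingale property, not Lemma \ref{L:set-augment} -- while yours buys a more constructive picture in the spirit the paper advertises for the token process, and your choice of i.i.d.\ uniform labels (whose restriction to a subtree is again i.i.d.\ uniform) is exactly what makes the recursion self-similar. The only loose ends in your write-up are cosmetic: the use of $M_i(\tau_i-)$ versus $M_i(u)$ inside the time integral is a null-set issue since for fixed $u$ the isolated subtree process a.s.\ has no jump at $u$, and the identity $\Pr(\max S^V_o(t)<z\mid X_{(\Tree,o)}(t)=k)=z^k$ deserves the one-line justification that a uniformly chosen $k$-subset of $n$ i.i.d.\ uniforms, chosen independently of their values, is distributed as $k$ i.i.d.\ uniforms.
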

\begin{proof}
Let $Y(t)$ denote the fortune of agent $o$ at time $t$ in the modified process where $o$ systematically wins every game she plays. Clearly,
\begin{equation}
\label{eq:decomp}
Y(t)\stackrel{d}{=}1+\sum_{i\sim o}X_{(\Tree_i,i)}(\tau_{oi}){\bf 1}_{\{\tau_{oi}\leq t\}},
\end{equation}
with the terms in the sum being independent.   
The original process can be coupled with the modified process in the natural  way, such that they coincide as long as $o$ has not lost a game (in the original process).
Hence, almost surely under this coupling, 
$$X_{(\Tree,o)}(t) = Y(t) \textrm{ or } X_{(\Tree,o)}(t) = 0.$$ 
From the ``fair game" structure of the CG process we know 
$\Ex\left[X_{(\Tree,o)}(t)\right]=1$.  
But conditioning on the times of meetings does not alter the ``fair game" structure; and because the times of meetings determine $Y(t)$, we have 
$\Ex\left[X_{(\Tree,o)}(t) \ | \ Y(t)\right]=1$.
So the conditional distribution of $X_{(\Tree,o)}(t)$ given $Y(t)$ must be
$$
X_{(\Tree,o)}(t)=
\left\{
\begin{array}{lcl}
Y(t) & \textrm{ w.p } & {1}/{Y(t)}\\
0 & \textrm{ w.p } & 1-{1}/{Y(t)}. 
\end{array}
\right.
.
$$
Hence,
\begin{equation}
\phi_{(\Tree,o)}(z,t) = 
\Ex\  [\Ex[ 1 - z^{X_{(\Tree,o)}(t)} | Y] \ ] 
=
\Ex\left[\frac{1-z^{Y(t)}}{Y(t)}\right]\\
= \int_{z}^1\Ex\left[\xi^{Y(t)-1}\right]d\xi.
\label{qwv}
\end{equation}
Now (\ref{eq:decomp}) gives 
\[ \Ex\left[\xi^{Y(t)-1}\right] = \prod_{i\sim o}\left(1-\int_{0}^t\phi_{(\Tree_i,i)}(\xi,u)du\right)\] 
and substituting into (\ref{qwv}) 
completes the proof. 
\end{proof}

\subsection{Infinite trees}
In the specific contexts below it is straightforward to 
 make rigorous our use of the simple CG process with an infinite number of agents and of the
local weak convergence results (\ref{LWC1},\ref{LWC2}) relating finite and infinite graphs, and we do not give details.
Instead we focus on trying to do explicit calculations.

\paragraph{The infinite $d$-ary tree.}
We next consider the case where $(\Tree,o)$ is the infinite $d-$ary tree 
rooted at $o$,  that is each vertex has $d \ge 1$ children.  
Writing 
 \begin{equation}
 \phi_d(z,t)=1-\Ex\left[z^{X_{(T,o)}(t)}\right]
\label{Tdphi}
\end{equation}
Lemma \ref{L:rec} gives the functional identity
\begin{eqnarray}
\label{eq:recd}
\phi_{d}(z,t)& = & \int_{z}^1 \left(1-\int_{0}^t\phi_d(\xi,u)du\right)^d d\xi. 
\end{eqnarray}
We could not determine $\phi_{d}(z,t)$ explicitly, but we will establish the following bounds, which are sufficient to obtain asymptotics as $d\to\infty$. 
\begin{lemma}
Setting $\varepsilon_d=\frac{2}{d}\log{\left(1+\frac{d}{2}\right)} < 1$, we have for $0\leq z,t\leq 1$ and $d\geq 1$,
$$\frac{2(1-z)(1-\varepsilon_d)}{2(1-\varepsilon_d)+d(1-z)t}\leq \phi_{d}(z,t) \leq \frac{2(1-z)}{2+d(1-z)t}.$$
In particular, $\Pr(X_{(\Tree,o)}(1)  \ne 0) = \phi_{d}(0,1)\sim{2}/{d}$ as $d\to\infty$. 
\end{lemma}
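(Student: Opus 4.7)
The plan is to convert the nested integral identity (\ref{eq:recd}) into a pointwise Bernoulli-type differential inequality in $t$ for $\phi_d(z,t)$, then integrate to obtain rational upper and lower bounds on $\phi_d$. Set $\Phi(\xi,t):=\int_0^t\phi_d(\xi,u)\,du$ and, for fixed $t$, abbreviate $a(\xi):=1-\Phi(\xi,t)$. Differentiating (\ref{eq:recd}) once in $z$ and once in $t$ gives
$$-\partial_z\phi_d(z,t)=a(z)^d,\qquad \partial_t\phi_d(z,t)=-d\int_z^1 a(\xi)^{d-1}\phi_d(\xi,t)\,d\xi.$$
Two easy monotonicities will be used throughout: $a$ is non-decreasing in $\xi$ on $[z,1]$ with $a(1)=1$, and $\phi_d(\cdot,t)$ is non-increasing with $\phi_d(1,t)=0$.

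The key manipulation is an integration by parts made available by the identity $a^d\phi_d=-\phi_d\,\partial_\xi\phi_d=-\tfrac12\partial_\xi(\phi_d^2)$. This yields
$$\int_z^1 a(\xi)^{d-1}\phi_d(\xi,t)\,d\xi=\frac{\phi_d(z,t)^2}{2\,a(z)}-\frac{1}{2}\int_z^1 \frac{a'(\xi)}{a(\xi)^2}\,\phi_d(\xi,t)^2\,d\xi,$$
the boundary term at $\xi=1$ vanishing because $\phi_d(1,t)=0$; note also that $a'/a^2\geq 0$.

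For the upper bound, use the monotonicity $\phi_d(\xi,t)\leq\phi_d(z,t)$ on $[z,1]$ to bound the remainder integral by $\phi_d(z,t)^2\bigl(1/a(z)-1\bigr)$. This gives $\int_z^1 a^{d-1}\phi_d\,d\xi\geq\tfrac12\phi_d(z,t)^2$ and hence $\partial_t\phi_d\leq-\tfrac{d}{2}\phi_d^2$. Integrating this Bernoulli ODE from $t=0$, with $\phi_d(z,0)=1-z$, yields $\phi_d(z,t)\leq 2(1-z)/(2+d(1-z)t)$.

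For the lower bound, simply drop the non-negative remainder integral to obtain $\int_z^1 a^{d-1}\phi_d\,d\xi\leq\phi_d(z,t)^2/(2a(z))$, and feed back the upper bound just established: for $0\leq z,t\leq 1$,
$$\Phi(z,t)\leq\int_0^t\frac{2(1-z)}{2+d(1-z)u}\,du=\tfrac{2}{d}\log\!\bigl(1+d(1-z)t/2\bigr)\leq\varepsilon_d,$$
so $a(z)\geq 1-\varepsilon_d$. Hence $\partial_t\phi_d\geq-\tfrac{d}{2(1-\varepsilon_d)}\phi_d^2$, and integrating gives the claimed lower bound. The $d\to\infty$ asymptotic $\phi_d(0,1)\sim 2/d$ is immediate because $\varepsilon_d=O(d^{-1}\log d)\to 0$. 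The main obstacle is spotting the integration-by-parts identity above; once that is in hand, the upper bound comes from exploiting the $\xi$-monotonicity of $\phi_d$ and the lower bound from simply discarding the (non-negative) remainder, sidestepping any comparison argument with the antitone integral operator in (\ref{eq:recd}).
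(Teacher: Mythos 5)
Your proof is correct and follows essentially the same route as the paper: differentiate the recursion (\ref{eq:recd}) in $z$ and $t$, deduce the two-sided Riccati-type inequality $\tfrac{d}{2}\phi_d^2\leq-\partial_t\phi_d\leq\tfrac{d}{2(1-\varepsilon_d)}\phi_d^2$, integrate $\partial_t(1/\phi_d)$, and bootstrap the upper bound into the lower bound via $\int_0^t\phi_d\leq\varepsilon_d$. The only (cosmetic) difference is how the intermediate inequalities are obtained: the paper uses the exact identity $\phi_d(z,t)^2=2\int_z^1\phi_d(\xi,t)\bigl(1-\int_0^t\phi_d(\xi,u)du\bigr)^d d\xi$ and compares the integrands $a^{d-1}$ and $a^d$ pointwise, whereas you reach the same bounds by an integration by parts with weight $1/a$ plus the $\xi$-monotonicity of $\phi_d$ — slightly more laborious but equally valid.
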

\begin{proof}
First note that, by (\ref{eq:recd}),
\begin{eqnarray*}
-\frac{\partial\phi_{d}}{\partial t}(z,t)& = & d\int_{z}^1 \phi_d(\xi,t)\left(1-\int_{0}^t\phi_d(\xi,u)du\right)^{d-1} d\xi,
\end{eqnarray*}
whereas, using again (\ref{eq:recd}) and the fact that $\phi_d(1,t)\equiv 0$,
\begin{eqnarray*}
\nonumber
\left(\phi_d(z,t)\right)^2 & = & -2\int_{z}^1 \phi_d(\xi,t) \frac{\partial\phi_{d}}{\partial z}(\xi,t)d\xi\\
& = & 2\int_{z}^1 \phi_d(\xi,t)\left(1-\int_{0}^t\phi_d(\xi,u)du\right)^{d} d\xi.
\end{eqnarray*}
Combining these two identities, we obtain
\begin{eqnarray}
\label{eq:identity}
\frac{\partial(1/\phi_d)}{\partial t}(z,t)& = & \frac{d}{2} \ \times \ \frac{\int_{z}^1 \phi_d(\xi,t)\left(1-\int_{0}^t\phi_d(\xi,u)du\right)^{d-1} d\xi}
{\int_{z}^1 \phi_d(\xi,t)\left(1-\int_{0}^t\phi_d(\xi,u)du\right)^{d} d\xi}.
\end{eqnarray}
Since $\phi_d$ is $[0,1]-$valued, we see that
\begin{eqnarray*}
\frac{\partial(1/\phi_d)}{\partial t}(z,t)& \geq & \frac{d}{2} .
\end{eqnarray*}
Integrating with respect to $t$ gives 
\[ \frac{1}{ \phi_d(z,t)} \ge \frac{1}{ \phi_d(z,0)} + \frac{td}{2} 
= \frac{1}{1-z} + \frac{td}{2} \]
which rearranges to the claimed upper bound. 
From this upper bound, it follows that for $0\leq z,t \leq 1$,
\begin{eqnarray}
\label{eq:epsi}
\int_{0}^t\phi_d(z,u)du & \leq & \varepsilon_d, 
\end{eqnarray}
which we may plug into (\ref{eq:identity}) to obtain:
\begin{eqnarray*}
\frac{\partial(1/\phi_d)}{\partial t}(z,t)& \leq & \frac{d}{2} 
\times \frac{1}{1-\varepsilon_d}.
\end{eqnarray*}
Integrating with respect to $t$ gives 
\[ \frac{1}{ \phi_d(z,t)} \le \frac{1}{ \phi_d(z,0)} + \frac{td}{2(1- \varepsilon_d)} 
= \frac{1}{1-z} + \frac{td}{2(1- \varepsilon_d)}  \]
which rearranges to the claimed lower bound. 
\end{proof}

\paragraph{The $r-$regular tree.}
The $r-$regular infinite tree consists of $r$ copies of a $(r-1)-$ary tree, connected to a root. Letting $\phi^*_r(z,t)$ denote the corresponding function, as at (\ref{Tdphi}),  the general recursion from Lemma \ref{L:rec} gives 
\begin{eqnarray*}
\phi^*_{r}(z,t)& = & \int_{z}^1 \left(1-\int_{0}^t\phi_{r-1}(\xi,u)du\right)^r d\xi. 
\end{eqnarray*} 
Comparing with (\ref{eq:recd}) and using (\ref{eq:epsi}), it follows that 
$$(1-\varepsilon_{r-1}) \phi_{r-1}\leq \phi^*_{r}\leq \phi_{r-1},$$
so that $\phi^*_r$ satisfies the same $r\to\infty$ asymptotics as does $\phi_{r-1}$. In particular, 
\begin{corollary}
\label{C:r-tree}
On the $r-$regular infinite tree $\Tree_r$, the probability $\rho(\Tree_r)
 = \phi^*_{r}(0,1)$ that a given agent finishes with non-zero money is $2/r+o(1/r)$ as $r\to\infty$.
\end{corollary}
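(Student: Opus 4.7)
The plan is to derive the corollary directly from a sandwich bound $(1-\varepsilon_{r-1})\phi_{r-1} \le \phi^*_r \le \phi_{r-1}$, combined with the explicit two-sided estimates on $\phi_{r-1}(0,1)$ already established in the previous lemma.

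First, I would set up the recursion for $\phi^*_r$. The root $o$ of $\Tree_r$ has $r$ neighbors, each of which is the root of an infinite $(r-1)$-ary subtree. Applying Lemma \ref{L:rec} (extended to the infinite tree via the local weak convergence argument that the paper has already declared routine), we obtain
$$\phi^*_r(z,t) \;=\; \int_z^1 \left(1 - \int_0^t \phi_{r-1}(\xi,u)\,du\right)^{r} d\xi,$$
which differs from the recursion (\ref{eq:recd}) for $\phi_{r-1}$ only in that the outer exponent is $r$ rather than $r-1$.

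Next I would derive the sandwich. Since $1 - \int_0^t \phi_{r-1}(\xi,u)\,du \in [0,1]$, raising to the larger power $r$ can only decrease it, so $\phi^*_r \le \phi_{r-1}$ is immediate. For the matching lower bound, factor the integrand as $\bigl(1 - \int\bigr)^{r-1}\bigl(1 - \int\bigr)$ and apply the uniform estimate $\int_0^t \phi_{r-1}(\xi,u)\,du \le \varepsilon_{r-1}$ from (\ref{eq:epsi}); this yields $\phi^*_r \ge (1-\varepsilon_{r-1})\,\phi_{r-1}$.

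Finally I would specialize to $(z,t)=(0,1)$. The previous lemma with $d=r-1$ gives
$$\frac{2(1-\varepsilon_{r-1})}{2(1-\varepsilon_{r-1}) + (r-1)} \;\le\; \phi_{r-1}(0,1) \;\le\; \frac{2}{r+1},$$
and since $\varepsilon_{r-1} = \sfrac{2}{r-1}\log(1 + \sfrac{r-1}{2}) = O(\log r / r) \to 0$, both bounds equal $2/r + o(1/r)$. Combined with the sandwich, this squeezes $\phi^*_r(0,1)=\rho(\Tree_r)$ between $(1-\varepsilon_{r-1})\cdot(2/r + o(1/r))$ and $2/r + o(1/r)$, both of which are $2/r + o(1/r)$, proving the claim. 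No step poses real difficulty; the only point requiring genuine care is the extension of Lemma \ref{L:rec} to the infinite tree, and this is precisely the technicality the paper elects to leave to the reader.
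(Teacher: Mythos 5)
Your proposal is correct and follows essentially the same route as the paper: the recursion from Lemma \ref{L:rec} with outer exponent $r$, the sandwich $(1-\varepsilon_{r-1})\phi_{r-1}\le\phi^*_r\le\phi_{r-1}$ obtained by comparison with (\ref{eq:recd}) and the bound (\ref{eq:epsi}), and then the two-sided estimate of the preceding lemma at $(z,t)=(0,1)$ with $d=r-1$. The only difference is that you spell out the evaluation of the bounds, which the paper leaves implicit.
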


\paragraph{Galton-Watson trees.} 
When the rooted tree $(T,o)$ is a random Galton-Watson tree with degree distribution $\{\pi_n:n\ge 0\}$,
the general recursion from Lemma \ref{L:rec}
 immediately leads to a recursive distributional equation for the annealed generating function 
$$\phi(z,t):=1-\Ex\left[z^{X_{(T,o)}(t)}\right],$$
where expectation is now taken with respect to both the randomness of the rooted tree and the randomness of the CG process. Letting $F_\pi(x)=\sum_{n}\pi_n x^n$ denote the degree generating function of the Galton-Watson tree, we readily obtain:
\begin{eqnarray}
\phi(z,t)& = & \int_{z}^1 F_\pi\left(1-\int_{0}^t\phi(\xi,u)du\right) d\xi. 
\label{eq:GW}
\end{eqnarray}
Extracting useful information from this equation for a general distribution 
$\{\pi_n:n\ge 0\}$ remains an open problem.

\subsection{The Poisson-Galton-Watson tree, the sparse \ER graph and the short-time behavior of the Kingman coalescent}
\label{sec:ERKC}

In the case where $\{\pi_n:n\ge 0\}$ is the Poisson distribution with mean $c\geq 0$ (i.e. $F_\pi(z)=e^{cx-c}$),  equation (\ref{eq:GW}) can be easily solved, yielding the following explicit formula:
\begin{eqnarray*}
\phi(z,t)& = & \frac{2(1-z)}{2+c(1-z)t}.
\end{eqnarray*}
Identifying this generating function, we find that the fortune $X(t)$ of the agent at the root has distribution specified by 
\begin{eqnarray}
&\Pr (X(t) > 0) = \sfrac{2}{2+ct} & \label{PGW1}\\
 \mbox{the conditional distribution of } & \mbox{$X(t)$ given $X(t) > 0$ } 
& \mbox{
is Geometric($\frac{2}{2+ct}$).} \label{PGW2}
\end{eqnarray}
Because the Poisson-Galton-Watson tree $\Tree^*_c$ is the local weak limit
of the sparse \ER random graph $\GG(n,c/n)$
we can deduce
\begin{equation}
 \Ex \rho(\GG(n,c/n)) \to \Ex \rho(\Tree^*_c) = \Pr(X(1) > 0) = \sfrac{2}{2+c} . \label{LWC2}
\end{equation}
Let us outline an interesting alternative explanation of why 
(\ref{PGW1}, \ref{PGW2}) arise here. 
Under our time-change (first meetings occur at Uniform$(0,1)$ random times)
the simple CG process on $\GG(n,c/n)$ arises from a two-stage construction: for each edge $e$ of the complete graph on $n$ vertices, first select $e$ with probability $c/n$, then (if selected) assign the 
Uniform random meeting time.  
But the set of meetings that occur before time $t \in [0,1]$ can alternatively 
be described by: 
for each edge $e$ of the complete graph, a meeting has occured with 
chance $ct/n$, independently over $e$, and meetings occured at 
independent Uniform times. 

Now consider the Kingman coalescent, 
in the spirit of more general stochastic coalescence 
models
\cite{me78,MR2574323,MR2253162,EC}, 
as a process of coalescing partitions 
of $\{1,2,\ldots,n\}$, being the special case in which each pair of blocks merges at constant rate.  
But take this rate to be $1/n$ instead of $1$.
The $n \to \infty$ limit distribution of block sizes at time $\tau$  
in this short-time limit regime is known to be Geometric($\frac{2}{2+\tau}$) 
-- see  Construction 5 in \cite{me78} for an intuitive explanation in terms of a process of coalescing intervals on $\Ints$.
But in our ``CG process on $\GG(n,c/n)$" above,  the process of fortunes of the solvent agents, considered as a process indexed by $\tau = ct$, 
evolves in essentially the same way as the process of block sizes in 
this Kingman coalescent, 
so (\ref{PGW2}) is ultimately equivalent to the short-time Geometric limit 
result for the Kingman coalescent.

\newpage
%\bibliographystyle{amsplain}
%\bibliography{CG}

\providecommand{\bysame}{\leavevmode\hbox to3em{\hrulefill}\thinspace}
\providecommand{\MR}{\relax\ifhmode\unskip\space\fi MR }
% \MRhref is called by the amsart/book/proc definition of \MR.
\providecommand{\MRhref}[2]{%
  \href{http://www.ams.org/mathscinet-getitem?mr=#1}{#2}
}
\providecommand{\href}[2]{#2}

\end{document}